\newcommand{\proset}{\,\mathrel{\lower 4pt\hbox{$\scriptscriptstyle/$}
\mkern -14mu\subseteq }\,} 
 \newtheorem{theorem}{Theorem}[section]
  \newtheorem{corollary}[theorem]{Corollary}
 \newtheorem{lemma}[theorem]{Lemma}
\numberwithin{equation}{section}
\def\ker{\operatorname{ker}}
\def\coker{\operatorname{coker}}
\begin{document}
\date{\today}

\title{Projective Bundle formula for Heller's relative $K_{0}$}
 \author{Vivek Sadhu} 
 
 \address{Department of Mathematics, Indian Institute of Science Education and Research Bhopal, Bhopal Bypass Road, Bhauri, Bhopal-462066, Madhya Pradesh, India}
 \email{ vsadhu@iiserb.ac.in, viveksadhu@gmail.com}
 \keywords{Relative K-groups, Projective bundle, Mumford-regular bundle}
 \subjclass[2010]{Primary 14C35; Secondary 19E08, 18E10.}
 \thanks{Author was supported by SERB-DST MATRICS grant MTR/2018/000283}

 \begin{abstract}
  In this article, we study the Heller relative $K_{0}$ group of the map $\mathbb{P}_{X}^{r} \to \mathbb{P}_{S}^{r},$ where $X$ and $S$ are quasi-projective schemes over a commutative ring. More precisely, we prove that the projective bundle formula holds for Heller's relative $K_{0},$ provided $X$ is flat over $S.$ As a corollary, we get a description of the relative group $K_{0}(\mathbb{P}_{X}^{r} \to \mathbb{P}_{S}^{r})$ in terms of generators and relations, provided  $X$ is affine and flat over $S.$
  \end{abstract}

\maketitle
 
\section{Introduction}
Throughout this article, we shall assume that all schemes have an ample family of line bundles. For example, every quasi-projective scheme over a commutative ring has an ample family of line bundles. Usually, $K$-theory of vector bundles behaves well under this assumption. By a quasi-projective scheme will always mean a scheme which is quasi-projective over some commutative ring.

Let $\mathcal{E}$ be a vector bundle of rank $r+1$ over a quasi-projective scheme $X,$ and $\mathbb{P}(\mathcal{E})$ denote the associated projective space bundle with the structure map $\pi: \mathbb{P}(\mathcal{E}) \to X.$ Then the projective bundle theorem for algebraic $K$-theory says that (see Theorem V.1.5 of \cite{wei 1}) for all $q \in \mathbb{Z},$ there is an isomorphism of groups
$$ K_{q}(X)^{r+1} \stackrel{\cong}\to K_{q}(\mathbb{P}(\mathcal{E})).$$ The goal of this article is to present a relative version of the above stated result for Heller's relative $K_{0}.$ 

Given a map of schemes $f: X \to S,$ let $K(f)$ denote the homotopy fibre of $K(S) \to K(X).$ Here $K(X)$ denotes the non-connective Bass $K$-theory spectrum of the scheme $X.$ Then $K_{n}(f),$ the $n$-th relative $K$-group of $f,$ is defined as $\pi_{n}K(f).$

In \cite{Hel}, Heller defined the relative $K_{0}$-group for a functor between certain categories. In this article, we are interested in Heller's relative $K_{0}$-groups for the pullback functor between the categories of vector bundles associated with a map of schemes. Given a map of schemes $f: X \to S,$ Heller's relative $K_{0}$ of $f^{*}$ is generated by the triples $(V_{1}, \alpha, V_{2})$ with suitable relations, where $V_{1}, V_{2}$ are vector bundles over $S$ and $\alpha: f^{*}V_{1}\to f^{*}V_{2}$ is an isomorphism (see Section \ref{k group definition}). We write $K_{0}^{He}(f)$ for the Heller relative $K_{0}$ of $f^{*}.$

It is natural to wonder the following: Is there any group isomorphism between $K_{0}(f)$ and $K_{0}^{He}(f)$? It is known that there is a group isomorphism if  $f: X \to S$ is a map of schemes with $X$ affine. More precisely, if $f: X \to S$ is a map of schemes with $X$ affine then there is an isomorphism of groups $K_{0}^{He}(f) \stackrel{\cong}\to K_{0}(f)$(see  \cite [Theorem 1.5 and Example 1.16]{RI}).
For an arbitrary $X,$ we do not know whether the above mentioned result is true or not. Therefore, it is interesting to study the group $K_{0}^{He}(\mathbb{P}_{X}^{r} \to \mathbb{P}_{S}^{r}).$

Given a map of quasi-projective schemes $f: X \to S,$ we have the following commutative diagram
\begin{equation}\label{basicdiagram1}
 \begin{CD}
   \mathbb{P}(\mathcal{E})\times_{S} X  =\mathbb{P}(f^{*}\mathcal{E}) @>  \mathbb{P}(f) >> \mathbb{P}(\mathcal{E})\\
    @V \pi_{X} VV  @V \pi_{S} VV \\                 
    X @> f >> S,
\end{CD}
\end{equation} where $\mathcal{E}$ is a vector bundle over $S.$ Note that if $\mathcal{E}= \mathcal{O}_{S}^{r+1}$ then $\mathbb{P}(f)$ is just the map  $\mathbb{P}_{X}^{r} \to \mathbb{P}_{S}^{r}.$ Under this situation, it is not hard to see that there is a group isomorphism (see Lemma \ref{projective bundle formula for K_n}) $$K_{n}(f)^{r+1} \stackrel{\cong}\to K_{n}(\mathbb{P}(f)).$$

We prove a similar result for relative $K_{0}^{He}$ in section \ref{first main theorem}. More precisely, we prove the following theorem (see Theorem \ref{proj formula for k heller}).

\begin{theorem}\label{projective bundle for relative K_0}
Let  $\mathbb{P}(f): \mathbb{P}(f^{*}\mathcal{E}) \to \mathbb{P}(\mathcal{E})$ be a map as in diagram (\ref{basicdiagram1}). Assume that $f: X \to S$ is a flat map of quasi-projective schemes and ${\rm {rank}}(\mathcal{\mathcal{E}})= r+1.$  Then there is an isomorphism of groups
 $$K_{0}^{He}(f)^{r+1} \stackrel{\cong}\to K_{0}^{He}(\mathbb{P}(f)).$$
\end{theorem}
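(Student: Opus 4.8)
The plan is to mimic the classical projective bundle argument at the level of the relative $K_0^{He}$-groups, but to do the bookkeeping with triples $(V_1,\alpha,V_2)$ rather than with $K$-theory spectra. The first step is to set up the map. The line bundle $\mathcal{O}(1)$ on $\mathbb{P}(\mathcal{E})$ pulls back to $\mathcal{O}(1)$ on $\mathbb{P}(f^{*}\mathcal{E})$ under $\mathbb{P}(f)$, so for $0 \le i \le r$ the assignment on a triple $(V_1,\alpha,V_2)$ over $S$ given by
$$
(V_1,\alpha,V_2) \longmapsto \bigl(\pi_{S}^{*}V_1 \otimes \mathcal{O}(-i),\ \pi_{X}^{*}(\text{pullback of }\alpha)\otimes \mathrm{id},\ \pi_{S}^{*}V_2\otimes \mathcal{O}(-i)\bigr)
$$
is well defined on generators, and one checks it respects Heller's relations (additivity on short exact sequences of triples and the composition relation for isomorphisms), hence induces a homomorphism $\phi_i : K_0^{He}(f) \to K_0^{He}(\mathbb{P}(f))$. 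Assembling these for $i=0,\dots,r$ gives the candidate isomorphism $\Phi = \sum_{i=0}^{r}\phi_i : K_0^{He}(f)^{r+1} \to K_0^{He}(\mathbb{P}(f))$.

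For surjectivity, the key input is that every vector bundle on $\mathbb{P}(\mathcal{E})$ becomes, after twisting by a sufficiently positive power of $\mathcal{O}(1)$, generated by its sections and has vanishing higher direct images along $\pi_{S}$ — this is exactly Mumford regularity, which the paper has flagged as a keyword. The idea is: given a generator $(W_1,\beta,W_2)$ of $K_0^{He}(\mathbb{P}(f))$, use Mumford-regularity (uniformly for $W_1$ and $W_2$, which is possible since they have the same rank/Hilbert polynomial data once $\beta$ identifies their pullbacks) to build, via the standard resolution
$$
0 \to \pi_{S}^{*}(\pi_{S})_{*}\bigl(W\otimes\mathcal{O}(r)\bigr)\otimes\mathcal{O}(-r) \to \cdots \to \pi_{S}^{*}(\pi_{S})_{*}W \to W \to 0,
$$
a resolution of the triple $(W_1,\beta,W_2)$ by triples in the image of the $\phi_i$'s. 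Here flatness of $f$ is used so that forming $(\pi_S)_*$ of a regular bundle commutes with the base change $f$, which is what guarantees that an isomorphism $\beta$ upstairs is itself the pullback of the data downstairs and that the resolution is compatible on both the $X$-side and the $S$-side. Using additivity of $K_0^{He}$ on exact sequences of triples, $(W_1,\beta,W_2)$ is then a $\mathbb Z$-linear combination of elements of $\operatorname{im}\Phi$, giving surjectivity.

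For injectivity one constructs a left inverse. The natural candidates are the "$K$-theoretic Euler characteristic" maps $\psi_j : K_0^{He}(\mathbb{P}(f)) \to K_0^{He}(f)$ sending a triple $(W_1,\beta,W_2)$ to the alternating sum $\sum_{k\ge 0}(-1)^{k}\bigl[(R^{k}\pi_{S})_{*}(W_1\otimes\mathcal{O}(j)),\ \ast,\ (R^{k}\pi_{S})_{*}(W_2\otimes\mathcal{O}(j))\bigr]$ for suitable twists $j$, where again flatness of $f$ makes the higher direct images base-change correctly so that the isomorphism $\beta$ produces a genuine isomorphism of the relative data. One then computes the composite $\psi_j\circ\phi_i$ on generators: by the projection formula and the computation of the cohomology of $\mathbb{P}^r$-bundles with coefficients in $\mathcal{O}(i-j)$, this composite is multiplication by an integer $a_{ij}$ depending only on $i,j$, and the matrix $(a_{ij})_{0\le i,j\le r}$ is (upper/lower) triangular with $\pm 1$ on the diagonal, hence invertible over $\mathbb Z$. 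Therefore a suitable $\mathbb Z$-linear combination of the $\psi_j$ is a left inverse of $\Phi$, so $\Phi$ is injective; combined with the previous step, $\Phi$ is an isomorphism.

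The main obstacle I expect is not the formal diagram-chasing but controlling the isomorphism components of the triples throughout: every short exact sequence and every application of $(\pi_S)_*$ or $R^{k}(\pi_S)_*$ must be shown to be compatible with the given isomorphism $\alpha$ (resp. $\beta$) on the $X$-side, and it is precisely here that the flatness hypothesis on $f$ is essential — it is what makes cohomology and base change commute so that "isomorphism upstairs" descends to "isomorphism of the push-forwards". Making the Mumford-regularity bounds uniform across $W_1$ and $W_2$ simultaneously, and across a finite set of generators when proving surjectivity, is the technical heart of the argument; everything else reduces to the classical computation of line-bundle cohomology on projective bundles together with the additivity relations defining $K_0^{He}$.
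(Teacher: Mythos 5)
Your map $\Phi$ is the paper's $u^{He}$, and your surjectivity argument is essentially the paper's: twist into the Mumford-regular range (via the Koszul resolution) and then resolve by the relative Quillen resolution, using additivity. That half is sound, modulo the detail that the terms of the resolution are $(\pi_{S}^{*}\mathcal{T}_{i}\mathcal{F})(-i)$ with $\mathcal{T}_{i}$ defined inductively from the kernels $Z_{i-1}$, not directly as $\pi_{S*}(W(i))$.

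The genuine gap is in the injectivity step. The maps $\psi_{j}$ you propose are not well defined on $K_{0}^{He}(\mathbb{P}(f))$: for an arbitrary vector bundle $W$ on $\mathbb{P}(\mathcal{E})$ the coherent sheaves $R^{k}\pi_{S*}(W(j))$ need not be locally free (cohomology jumps along $S$ already for rank-$2$ bundles on $\mathbb{P}^{1}\times\mathbb{A}^{1}$), so the proposed value is not a class of triples in ${\bf{VB}}(f)$; and even granting coherence, additivity of the alternating sum on a short exact sequence of triples would require the long exact sequence of higher direct images to be an exact sequence in ${\bf{Mod}}(f)$, i.e.\ compatible with the isomorphism components, which you do not address. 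The pushforward $\pi_{S*}(\,\cdot\,(i))$ is exact and bundle-valued only on Mumford-regular objects (Lemmas \ref{basic3} and \ref{basic2}), so the left inverse can only be defined on ${\bf{MR}}(\mathbb{P}(f))$; to finish you would then need to know that the Heller group of the Mumford-regular subcategory maps isomorphically onto $K_{0}^{He}(\mathbb{P}(f))$, and the colimit/additivity arguments that give this for Quillen's $K_{0}$ do not formally apply to Heller's group because of the extra composition relation. This is exactly what the paper's structure is designed to handle: it first proves the formula for the auxiliary group $K_{0}^{Q}$ (exact-sequence relations only), where Theorem \ref{MR=VB} permits replacing ${\bf{VB}}(\mathbb{P}(f))$ by ${\bf{MR}}(\mathbb{P}(f))$ and the left inverse $v^{Q}$ built from $v_{i}=\pi_{S*}(\,\cdot\,(i))$ makes sense, and then descends to $K_{0}^{He}$ by showing $u^{Q}$ carries $\ker\bigl(K_{0}^{Q}(f)^{r+1}\to K_{0}^{He}(f)^{r+1}\bigr)$ onto $\ker\bigl(K_{0}^{Q}(\mathbb{P}(f))\to K_{0}^{He}(\mathbb{P}(f))\bigr)$ and applying the five lemma. (A minor further slip: $\psi_{j}\circ\phi_{i}$ for $i>j$ is tensoring with ${\rm Sym}_{i-j}\mathcal{E}$, not multiplication by an integer; only the triangularity with identity diagonal is needed.)
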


Using the Theorem \ref{projective bundle for relative K_0}, we also prove the following (see Corollary \ref{He=k}):
\begin{corollary}
 Suppose $\mathbb{P}(f),$ $f$ and $\mathcal{E}$ are as in Theorem \ref{projective bundle for relative K_0}.  Further, we assume that $X$ is an affine scheme. Then there is an isomorphism of groups $K_{0}^{He}(\mathbb{P}(f))\cong K_{0}(\mathbb{P}(f)).$
\end{corollary}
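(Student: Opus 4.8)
The plan is to reduce the statement to the already-known affine comparison theorem by transporting it along the two projective bundle formulas now at our disposal. The key point to keep in mind is that one \emph{cannot} simply apply \cite[Theorem 1.5 and Example 1.16]{RI} to the morphism $\mathbb{P}(f)$ directly, since its source $\mathbb{P}(f^{*}\mathcal{E})$ is a projective bundle over $X$ and hence is not affine, even when $X$ is. Instead, the hypothesis that $X$ is affine is used for the morphism $f$ itself, and flatness of $f$ is exactly what allows Theorem \ref{projective bundle for relative K_0} to propagate that affine input to a statement about $\mathbb{P}(f)$.

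Concretely, I would argue as follows. First, since $X$ is affine, \cite[Theorem 1.5 and Example 1.16]{RI} gives an isomorphism of groups $K_{0}^{He}(f) \stackrel{\cong}{\to} K_{0}(f)$; taking the $(r+1)$-fold direct sum yields $K_{0}^{He}(f)^{r+1} \cong K_{0}(f)^{r+1}$. Next, because $f$ is flat and $\operatorname{rank}(\mathcal{E}) = r+1$, Theorem \ref{projective bundle for relative K_0} applies and gives $K_{0}^{He}(f)^{r+1} \cong K_{0}^{He}(\mathbb{P}(f))$. Finally, Lemma \ref{projective bundle formula for K_n} applied with $n=0$ gives $K_{0}(f)^{r+1} \cong K_{0}(\mathbb{P}(f))$ (for a general $\mathcal{E}$ of rank $r+1$ the same argument as in that lemma applies, by naturality of the projective bundle formula in the base). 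Composing the three isomorphisms,
$$K_{0}^{He}(\mathbb{P}(f)) \;\cong\; K_{0}^{He}(f)^{r+1} \;\cong\; K_{0}(f)^{r+1} \;\cong\; K_{0}(\mathbb{P}(f)),$$
which is the desired conclusion.

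There is no serious obstacle here; the content is entirely in assembling the right ingredients and noticing the direction of the reduction. The one point I would treat with a little care, if one wants the \emph{canonical} comparison homomorphism $K_{0}^{He}(\mathbb{P}(f)) \to K_{0}(\mathbb{P}(f))$ (and not merely some abstract isomorphism of groups) to be the map realizing the corollary, is compatibility: one should check that the isomorphism of Theorem \ref{projective bundle for relative K_0} and that of Lemma \ref{projective bundle formula for K_n} are both induced by the same recipe — twisting by the line bundles $\mathcal{O}(i)$ for $0 \le i \le r$ followed by pullback along the structure map $\pi$ — so that they are intertwined by the natural transformation $K_{0}^{He}(-) \to K_{0}(-)$. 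Granting this, the displayed composite coincides with the canonical comparison map. This verification is routine given the explicit shape of the two formulas established earlier in the paper, so I would state it as a short remark rather than belabor it.
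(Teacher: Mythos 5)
Your proposal is correct and follows essentially the same route as the paper: combine the affine comparison $K_{0}^{He}(f)\cong K_{0}(f)$ (Lemma \ref{K^b= K for affine}) with the two projective bundle formulas (Theorem \ref{proj formula for k heller} and Lemma \ref{projective bundle formula for K_n}) and compose. Your closing remark about identifying the composite with the canonical comparison map is a reasonable extra observation, but the paper, like your main argument, only claims an abstract isomorphism of groups, so it is not needed.
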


{\bf Acknowledgements:} The author would like to thank Charles Weibel for his helpful comments during the preparation of this article. He would also like to thank the referee for valuable comments and suggestions.

 \section{Preliminaries}
  
  For a scheme $X,$ let ${\bf{Mod}}(X)$ denote the category of $\mathcal{O}_{X}$-modules and ${\bf{VB}}(X)$ denote the category of vector bundles on $X$. It is well known that  ${\bf{Mod}}(X)$ is an abelian category and ${\bf{VB}}(X)$ is an exact subcategory of ${\bf{Mod}}(X).$
  
  Given a map of schemes $f: X \to S,$ we define a category  ${\bf{Mod}}(f)$ whose objects are triples $(\mathcal{F}_{1}, \alpha, \mathcal{F}_{2})$ with $\mathcal{F}_{1}, \mathcal{F}_{2} \in {\bf{Mod}}(S)$ and $\alpha: f^{*}\mathcal{F}_{1}\cong f^{*}\mathcal{F}_{2}$ in ${\bf{Mod}}(X).$ A morphism $(\mathcal{F}_{1}, \alpha, \mathcal{F}_{2}) \to (\mathcal{F}_{1}^{'}, \alpha^{'}, \mathcal{F}_{2}^{'})$ is a pair of maps $u: \mathcal{F}_{1} \to \mathcal{F}_{1}^{'}$, $v: \mathcal{F}_{2} \to \mathcal{F}_{2}^{'}$ in ${\bf{Mod}}(S)$ such that $\alpha^{'} f^{*}u= f^{*}v~ \alpha$. In a similar way, we can define ${\bf{VB}}(f)$ by replacing $\mathcal{O}_{S}$-modules with vector bundles on $S$. 
  
  If $f$ is a flat map then $(\ker(u), \tilde{\alpha}, \ker(v))\in {\bf{Mod}}(f),$ where $$\tilde{\alpha}: f^{*}\ker(u)\cong \ker(f^{*}u)\cong \ker(f^{*}v)\cong f^{*}\ker(v).$$
  
  Hereafter, we assume that $f: X \to S$ is a flat map of schemes.
  
  Given a morphism $(u, v): (\mathcal{F}_{1}, \alpha, \mathcal{F}_{2}) \to (\mathcal{F}_{1}^{'}, \alpha^{'}, \mathcal{F}_{2}^{'})$ in ${\bf{Mod}}(f),$ we define
  $$\ker(u, v):= (\ker(u), \tilde{\alpha}, \ker(v)) ~{\rm {and}}~ \coker (u, v):= (\coker(u), \tilde{\alpha{'}}, \coker(v)),$$ where $\tilde{\alpha{'}}$ induces from $\alpha^{'}.$ Under this definition, ${\bf{Mod}}(f)$ is an abelian category.

  The following lemma is an easy observation.
  \begin{lemma}\label{exact seq}
  Let 
  \begin{equation}\label{exactness in Mod}
   (\mathcal{F}_{1}, \alpha, \mathcal{F}_{2}) \stackrel{(u,v)}\to (\mathcal{F}_{1}^{'}, \alpha^{'}, \mathcal{F}_{2}^{'})\stackrel{(u^{'}, v^{'})}\to (\mathcal{F}_{1}^{''}, \alpha^{''}, \mathcal{F}_{2}^{''})
  \end{equation} be a sequence in ${\bf{Mod}}(f).$ Then (\ref{exactness in Mod}) is exact if and only if 
   $$ \mathcal{F}_{1} \stackrel{u}\to \mathcal{F}_{1}^{'} \stackrel{u^{'}}\to \mathcal{F}_{1}^{''}$$
   and $$\mathcal{F}_{2} \stackrel{v}\to \mathcal{F}_{2}^{'} \stackrel{v^{'}}\to \mathcal{F}_{2}^{''}$$ both are exact in ${\bf{Mod}}(S).$
  \end{lemma}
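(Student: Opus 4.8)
The plan is to deduce everything from the componentwise description of kernels and cokernels in $\mathbf{Mod}(f)$ recorded just above the statement. Consider the two forgetful functors $p_{i}\colon \mathbf{Mod}(f) \to \mathbf{Mod}(S)$, $i=1,2$, sending $(\mathcal{F}_{1},\alpha,\mathcal{F}_{2})$ to $\mathcal{F}_{1}$ and to $\mathcal{F}_{2}$ respectively, and a morphism $(u,v)$ to $u$ and to $v$. These are clearly additive, and by the very definition of $\ker(u,v)$ and $\coker(u,v)$ each $p_{i}$ preserves kernels and cokernels; hence each $p_{i}$ is an exact functor of abelian categories.

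Next I would recall that in any abelian category a two-term sequence $A \xrightarrow{g} B \xrightarrow{h} C$ with $hg=0$ is exact at $B$ precisely when its homology object $H(g,h):=\ker(h)/\im(g)$ vanishes, and that $\im$ is itself built from kernels and cokernels (e.g.\ $\im(g)=\ker(B\to\coker(g))$), so that an exact functor carries $H(g,h)$ to the homology of the image sequence. Applying this to $p_{1}$ and $p_{2}$ and to the sequence (\ref{exactness in Mod}) (note that $(u',v')(u,v)=0$ holds componentwise, so all homology objects in sight are defined) shows that the first component of the homology object of (\ref{exactness in Mod}) in $\mathbf{Mod}(f)$ is the homology of $\mathcal{F}_{1}\xrightarrow{u}\mathcal{F}_{1}'\xrightarrow{u'}\mathcal{F}_{1}''$, and the second component is the homology of $\mathcal{F}_{2}\xrightarrow{v}\mathcal{F}_{2}'\xrightarrow{v'}\mathcal{F}_{2}''$.

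Finally I would observe that an object $(\mathcal{G}_{1},\beta,\mathcal{G}_{2})$ of $\mathbf{Mod}(f)$ is the zero object if and only if $\mathcal{G}_{1}=0$ and $\mathcal{G}_{2}=0$ (the isomorphism $\beta$ is then forced). Hence the homology object of (\ref{exactness in Mod}) vanishes in $\mathbf{Mod}(f)$ if and only if both of its components vanish in $\mathbf{Mod}(S)$, which by the previous paragraph is exactly the statement that both component sequences are exact. This gives the claimed equivalence.

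There is essentially no serious obstacle here; the only point requiring a line of care is the verification that the forgetful functors are exact, and that is immediate from the componentwise formulas for $\ker$ and $\coker$. An equally short alternative avoids homology: compute $\ker(u',v')=(\ker u',\widetilde{\alpha'},\ker v')$ and $\im(u,v)=(\im u,\widetilde{\alpha},\im v)$ componentwise, and note that the inclusion $\im(u,v)\subseteq\ker(u',v')$ is an equality of subobjects of $(\mathcal{F}_{1}',\alpha',\mathcal{F}_{2}')$ if and only if it is an equality in each component. I would present whichever version reads more cleanly.
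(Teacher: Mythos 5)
Your proposal is correct; the paper itself offers no argument, dismissing the lemma as ``an easy observation,'' and your componentwise reasoning (forgetful functors are exact because $\ker$ and $\coker$ in ${\bf{Mod}}(f)$ are computed componentwise, and an object of ${\bf{Mod}}(f)$ vanishes iff both components do) is precisely the observation intended. Either of your two variants --- via homology objects or via the direct comparison of $\im(u,v)$ with $\ker(u',v')$ as subobjects --- is a complete and valid justification.
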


  Suppose that the sequence
   $$0 \to (\mathcal{F}_{1}, \alpha, \mathcal{F}_{2}) \stackrel{(u,v)}\to (\mathcal{F}_{1}^{'}, \alpha^{'}, \mathcal{F}_{2}^{'})\stackrel{(u^{'}, v^{'})}\to (\mathcal{F}_{1}^{''}, \alpha^{''}, \mathcal{F}_{2}^{''})\to 0$$ is exact in ${\bf{Mod}}(f)$ with $(\mathcal{F}_{1}, \alpha, \mathcal{F}_{2}), (\mathcal{F}_{1}^{''}, \alpha^{''}, \mathcal{F}_{2}^{''}) \in {\bf{VB}}(f).$ Since ${\bf{VB}}(S)$ is an exact subcategory of ${\bf{Mod}}(S),$ $p: \mathcal{F}_{1}^{'}\cong V_{1}$ and $q: \mathcal{F}_{2}^{'}\cong V_{2},$ where $V_{1}, V_{2} \in {\bf{VB}}(S)$(using Lemma \ref{exact seq}). Then $(p, q): (\mathcal{F}_{1}^{'}, \alpha^{'}, \mathcal{F}_{2}^{'})\cong (V_{1}, \beta, V_{2})$ in ${\bf{VB}}(f),$ where $\beta= f^{*}q.\alpha^{'}.f^{*}p^{-1}.$ This proves the following:
   
   \begin{lemma}\label{exact subcategory}
    ${\bf{VB}}(f)$ is an exact subcategory of ${\bf{Mod}}(f).$
   \end{lemma}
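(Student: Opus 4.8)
The plan is to deduce this from the standard criterion that a full additive subcategory of an abelian category which is closed under extensions inherits a canonical exact structure, whose admissible short exact sequences are exactly those sequences that are short exact in the ambient abelian category and have all three terms in the subcategory. So the work reduces to checking two things about the inclusion ${\bf{VB}}(f)\subseteq{\bf{Mod}}(f)$.

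First I would record that ${\bf{VB}}(f)$ is a full additive subcategory of ${\bf{Mod}}(f)$: fullness is immediate from the definitions of the two categories (morphisms are the same pairs $(u,v)$), it contains the zero object $(0,0,0)$, and it is closed under finite direct sums since $\mathcal{F}_{1}\oplus\mathcal{F}_{1}^{'}$ and $\mathcal{F}_{2}\oplus\mathcal{F}_{2}^{'}$ are again vector bundles on $S$ and $f^{*}$ commutes with direct sums, so the componentwise sum of two triples in ${\bf{VB}}(f)$ lies in ${\bf{VB}}(f)$.

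The one step with any content is extension-closure, and here I would essentially run the argument already indicated before the statement. Given a short exact sequence $0 \to (\mathcal{F}_{1}, \alpha, \mathcal{F}_{2}) \to (\mathcal{F}_{1}^{'}, \alpha^{'}, \mathcal{F}_{2}^{'})\to (\mathcal{F}_{1}^{''}, \alpha^{''}, \mathcal{F}_{2}^{''})\to 0$ in ${\bf{Mod}}(f)$ with the two outer triples in ${\bf{VB}}(f)$, Lemma~\ref{exact seq} gives that $0\to\mathcal{F}_{i}\to\mathcal{F}_{i}^{'}\to\mathcal{F}_{i}^{''}\to 0$ is exact in ${\bf{Mod}}(S)$ for $i=1,2$, with outer terms vector bundles; since ${\bf{VB}}(S)$ is an exact (hence extension-closed) subcategory of ${\bf{Mod}}(S)$, there are isomorphisms $p:\mathcal{F}_{1}^{'}\cong V_{1}$ and $q:\mathcal{F}_{2}^{'}\cong V_{2}$ with $V_{1},V_{2}\in{\bf{VB}}(S)$. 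Transporting $\alpha^{'}$ along these, $\beta:=f^{*}q\circ\alpha^{'}\circ f^{*}p^{-1}:f^{*}V_{1}\cong f^{*}V_{2}$, and $(p,q)$ is an isomorphism $(\mathcal{F}_{1}^{'}, \alpha^{'}, \mathcal{F}_{2}^{'})\cong(V_{1},\beta,V_{2})$ in ${\bf{VB}}(f)$; thus the middle term is isomorphic to an object of ${\bf{VB}}(f)$, which is what extension-closure demands. (Flatness of $f$, standing in force, is what makes ${\bf{Mod}}(f)$ abelian and $f^{*}$ exact, so all of this is legitimate.)

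Finally I would invoke the cited general fact (e.g. Quillen's axioms as set up in B\"uhler's \emph{Exact categories}) to conclude that ${\bf{VB}}(f)$, with the class of sequences exact in ${\bf{Mod}}(f)$ and lying termwise in ${\bf{VB}}(f)$, is an exact category, i.e. an exact subcategory of ${\bf{Mod}}(f)$. I do not expect a genuine obstacle here; the only point requiring care is the extension-closure verification, and that collapses via Lemma~\ref{exact seq} to the already-known extension-closure of ${\bf{VB}}(S)$ in ${\bf{Mod}}(S)$.
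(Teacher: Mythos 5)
Your proposal is correct and follows essentially the same route as the paper: the paper's proof is exactly the extension-closure argument you give (apply Lemma~\ref{exact seq}, use that ${\bf{VB}}(S)$ is extension-closed in ${\bf{Mod}}(S)$ to get $p,q$, and transport $\alpha^{'}$ to $\beta=f^{*}q\circ\alpha^{'}\circ f^{*}p^{-1}$), with the fullness/additivity checks and the appeal to the standard criterion left implicit.
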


\section{Relative K-theory}\label{k group definition}
Let $f: X \to S$ be a map of schemes. Let $K(f)$ be the homotopy fibre of $K(S) \to K(X).$ Here $K(X)$ denotes the non-connective Bass $K$-theory spectrum of the scheme $X.$ Then $$K_{n}(f):= \pi_{n}K(f)$$ for $n\in \mathbb{Z}.$

In \cite{Hel}, Heller introduced the relative $K_{0}$-groups for a functor between certain categories (see also \cite{Bass-Tata}). Following Heller \cite{Hel}, very recently R. Iwasa in \cite{RI} define the relative $K_{0}$-groups for an exact functor between small exact categories. We now recall the definition from \cite{RI} in a special situation. For more details, we refer to section 1 of \cite{RI}. Consider ${\bf{VB}}(f)$ as the relative category associated to the pullback functor $f^{*}: {\bf{VB}}(S) \to {\bf{VB}}(X).$ We define $K_{0}(f^{*})$ to be the abelian group generated by $[(V_{1}, \alpha, V_{2})],$ where $(V_{1}, \alpha, V_{2}) \in {\bf{VB}}(f).$ The relations are:
\begin{itemize}
\item  $[V^{'}] + [V^{''}] = [V]$ for every exact sequence $0 \to V^{'} \to V \to V^{''}\to 0$ in  ${\bf{VB}}(f);$
\item  $[(V_{1},\alpha,V_{2})] + [(V_{2},\beta,V_{3})] = [(V_{1},\beta\alpha,V_{3})]$ for every pair $(V_{1},\alpha,V_{2}), (V_{2},\beta,V_{3})$ of objects in ${\bf{VB}}(f).$
\end{itemize}

We prefer to write $K_{0}^{He}(f)$ for $K_{0}(f^{*}).$

Next, we discuss the relationship between $K_{0}(f)$ and $K_{0}^{He}(f).$ By applying Theorem 1.5 of \cite{RI} to the functor $f^{*}: {\bf{VB}}(S) \to {\bf{VB}}(X)$ with $X$ affine, we get the following: 
\begin{lemma}\label{K^b= K for affine}
 Let $f: X \to S$ be a map of schemes with $X$ affine. Then there is an isomorphism $K_{0}^{He}(f)\stackrel{\cong}\to K_{0}(f).$
\end{lemma}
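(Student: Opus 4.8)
The plan is to realise this as a direct application of Iwasa's comparison theorem \cite[Theorem 1.5]{RI}, so the real work is (i) writing down the natural comparison homomorphism explicitly, (ii) checking surjectivity by hand using affineness, and (iii) quoting \cite[Example 1.16]{RI} for the delicate injectivity statement. First I would recall that ${\bf{VB}}(f)$ is precisely Heller's relative category attached to the exact pullback functor $f^{*}\colon {\bf{VB}}(S)\to {\bf{VB}}(X)$, so that $K_{0}^{He}(f)=K_{0}(f^{*})$ in the sense of Section \ref{k group definition}. On the other side, $K_{0}(f)=\pi_{0}K(f)$ sits in the long exact sequence $K_{1}(X)\xrightarrow{\partial}K_{0}(f)\to K_{0}(S)\to K_{0}(X)$; since this segment only involves $\pi_{\geq 0}$, the value of $\pi_{0}$ of the homotopy fibre is the same whether one uses the connective or the non-connective Bass spectrum, so there is no discrepancy to worry about here. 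The comparison map $\Phi\colon K_{0}^{He}(f)\to K_{0}(f)$ sends a generator $[(V_{1},\alpha,V_{2})]$ to the relative class determined by $[V_{1}]-[V_{2}]\in\ker\bigl(K_{0}(S)\to K_{0}(X)\bigr)$ together with the trivialisation $\alpha$ of its image in $K_{0}(X)$; I would check that both Heller relations (additivity along exact sequences and composition of isomorphisms) are carried to relations in $\pi_{0}K(f)$, so that $\Phi$ is a well-defined homomorphism.

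Next I would verify that $\Phi$ is surjective, and this is exactly where affineness of $X$ enters directly. Writing $X=\operatorname{Spec} A$, the exact sequence above shows that every class in $K_{0}(f)$ either lies in the image of $\partial$ or maps to an element $[V_{1}]-[V_{2}]$ of $\ker(K_{0}(S)\to K_{0}(X))$. For the second case, $f^{*}V_{1}$ and $f^{*}V_{2}$ are stably isomorphic over $X$, so after adding a trivial bundle to each I may choose an honest isomorphism $\alpha\colon f^{*}(V_{1}\oplus\mathcal{O}_{S}^{n})\cong f^{*}(V_{2}\oplus\mathcal{O}_{S}^{n})$, and the triple $(V_{1}\oplus\mathcal{O}_{S}^{n},\alpha,V_{2}\oplus\mathcal{O}_{S}^{n})$ hits the given class. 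For the first case I use that, because $X$ is affine, $K_{1}(X)=GL(A)/E(A)$ is generated by automorphisms of free modules $f^{*}\mathcal{O}_{S}^{n}$; such an automorphism $u$ produces the triple $(\mathcal{O}_{S}^{n},u,\mathcal{O}_{S}^{n})$, whose image under $\Phi$ is $\partial([u])$. Together these show $\Phi$ is onto. It is worth emphasising that this step genuinely needs $X$ affine: for general $X$, $K_{1}(X)$ is not generated by automorphisms of pulled-back trivial bundles, and the argument breaks down.

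The hard part will be injectivity of $\Phi$, that is, showing that the two Heller relations already generate all relations holding in $\pi_{0}K(f)$. This is precisely the content of \cite[Theorem 1.5]{RI}, which gives a natural isomorphism between Heller's relative $K_{0}$ of an exact functor and $\pi_{0}$ of the homotopy fibre of the associated $K$-theory map, under the hypotheses identified there. I would therefore finish by checking that $f^{*}\colon {\bf{VB}}(S)\to {\bf{VB}}(X)$ satisfies those hypotheses, which for $X$ affine is exactly \cite[Example 1.16]{RI}; invoking that result upgrades the surjection $\Phi$ to the desired isomorphism $K_{0}^{He}(f)\xrightarrow{\cong}K_{0}(f)$. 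Rather than reproving Iwasa's theorem, the efficient route is to state the comparison map, record the elementary surjectivity check above, and cite \cite[Theorem 1.5 and Example 1.16]{RI} for the remaining injectivity.
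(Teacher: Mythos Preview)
Your proposal is correct and matches the paper's approach: the paper does not give a standalone proof but simply records the lemma as the direct specialisation of \cite[Theorem~1.5]{RI} to $f^{*}\colon {\bf VB}(S)\to {\bf VB}(X)$, with \cite[Example~1.16]{RI} supplying the required hypothesis when $X$ is affine. Your added hands-on surjectivity check via the long exact sequence and $K_{1}(A)=GL(A)/E(A)$ is fine but superfluous, since once Example~1.16 is invoked, Iwasa's theorem already delivers both injectivity and surjectivity of $\Phi$.
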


The next result is the projective bundle formula for relative $K$-theory.
\begin{lemma}\label{projective bundle formula for K_n}
  Let  $\mathbb{P}(f): \mathbb{P}(f^{*}\mathcal{E}) \to \mathbb{P}(\mathcal{E})$ be a map as in diagram (\ref{basicdiagram1}). Assume that the rank of $\mathcal{E}$ is $r+1.$ Then there is a natural isomorphism of groups $K_{n}(f)^{r+1}\stackrel{\cong}\to K_{n}(\mathbb{P}(f))$ for all $n \in \mathbb{Z}.$
 \end{lemma}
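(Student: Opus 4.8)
The plan is to deduce the relative projective bundle formula from the absolute one (Theorem V.1.5 of \cite{wei 1}) together with the five lemma applied to the long exact sequences of homotopy fibres. First I would recall that, by diagram (\ref{basicdiagram1}), there is a commutative square of $K$-theory spectra
\begin{equation*}
\begin{CD}
 K(\mathbb{P}(\mathcal{E})) @>>> K(\mathbb{P}(f^{*}\mathcal{E}))\\
 @VVV @VVV\\
 K(\mathbb{P}(f))[1] @>>> K(\mathbb{P}(f))[1],
\end{CD}
\end{equation*}
more precisely $K(\mathbb{P}(f))$ is by definition the homotopy fibre of $K(\mathbb{P}(\mathcal{E})) \to K(\mathbb{P}(f^{*}\mathcal{E}))$, fitting into a long exact sequence
\begin{equation*}
\cdots \to K_{n+1}(\mathbb{P}(f^{*}\mathcal{E})) \to K_{n}(\mathbb{P}(f)) \to K_{n}(\mathbb{P}(\mathcal{E})) \to K_{n}(\mathbb{P}(f^{*}\mathcal{E})) \to \cdots.
\end{equation*}
Similarly $K(f)$ is the homotopy fibre of $K(S) \to K(X)$. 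The key point is that the projective bundle isomorphism of Theorem V.1.5 of \cite{wei 1} is natural: the maps $K_{n}(S)^{r+1} \xrightarrow{\cong} K_{n}(\mathbb{P}(\mathcal{E}))$ and $K_{n}(X)^{r+1} \xrightarrow{\cong} K_{n}(\mathbb{P}(f^{*}\mathcal{E}))$ are compatible with the pullbacks along $f$ and $\mathbb{P}(f)$, since these isomorphisms are built from the classes $[\mathcal{O}(-i)]$ for $0 \le i \le r$, which pull back to the corresponding classes over $X$ because $\mathbb{P}(f)^{*}\mathcal{O}_{\mathbb{P}(\mathcal{E})}(-i) \cong \mathcal{O}_{\mathbb{P}(f^{*}\mathcal{E})}(-i)$.

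Next I would assemble these into a morphism of long exact sequences. The naturality of the projective bundle formula gives a commutative ladder whose rows are the localization-type long exact sequences for $K(f)^{r+1}$ and $K(\mathbb{P}(f))$, with vertical maps $K_{n}(f)^{r+1} \to K_{n}(\mathbb{P}(f))$ on the relative terms and the projective bundle isomorphisms $K_{n}(S)^{r+1} \xrightarrow{\cong} K_{n}(\mathbb{P}(\mathcal{E}))$, $K_{n}(X)^{r+1} \xrightarrow{\cong} K_{n}(\mathbb{P}(f^{*}\mathcal{E}))$ on the absolute terms. Concretely, the diagram
\begin{equation*}
\begin{CD}
 K_{n+1}(X)^{r+1} @>>> K_{n}(f)^{r+1} @>>> K_{n}(S)^{r+1} @>>> K_{n}(X)^{r+1}\\
 @V\cong VV @VVV @V\cong VV @V\cong VV\\
 K_{n+1}(\mathbb{P}(f^{*}\mathcal{E})) @>>> K_{n}(\mathbb{P}(f)) @>>> K_{n}(\mathbb{P}(\mathcal{E})) @>>> K_{n}(\mathbb{P}(f^{*}\mathcal{E}))
\end{CD}
\end{equation*}
commutes (the left-hand square requires checking that the connecting map $\partial$ is natural with respect to the morphism of fibre sequences, which is automatic since the spectra $K(f)^{r+1}$ and $K(\mathbb{P}(f))$ are defined as homotopy fibres and the relevant square of spectra commutes up to a specified homotopy). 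Once this ladder is in place, the five lemma forces the middle vertical map $K_{n}(f)^{r+1} \to K_{n}(\mathbb{P}(f))$ to be an isomorphism for every $n \in \mathbb{Z}$; taking $n = 0$ (or any $n$) gives the claim, and naturality is inherited from naturality of all the ingredients.

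The main obstacle I expect is entirely at the spectrum level: one must produce an honest morphism of homotopy fibre sequences, i.e. a homotopy-commutative square
\begin{equation*}
\begin{CD}
 K(S)^{r+1} @>>> K(X)^{r+1}\\
 @VVV @VVV\\
 K(\mathbb{P}(\mathcal{E})) @>>> K(\mathbb{P}(f^{*}\mathcal{E}))
\end{CD}
\end{equation*}
together with a chosen homotopy filling it, so that one gets an induced map on homotopy fibres. This is where the naturality of Theorem V.1.5 of \cite{wei 1} must be used carefully — the projective bundle equivalence is induced by an exact functor (tensoring with the tautological bundles and pushing forward), and one has to observe that this functor commutes with $f^{*}$ up to natural isomorphism, which yields the needed homotopy. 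Flatness of $f$ does not seem to be needed for this particular lemma (it is needed later for $K_{0}^{He}$, to make kernels and cokernels behave), since $\mathbb{P}(f)$ and the projection $\pi_{X}$ are flat regardless; I would remark on this. The rest is formal diagram chasing.
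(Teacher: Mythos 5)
Your proposal is correct and follows essentially the same route as the paper: apply the absolute projective bundle equivalence of Theorem V.1.5 of \cite{wei 1} to get a map of homotopy fibre sequences $K(f)^{r+1}\to K(\mathbb{P}(f))$ over the equivalences $K(S)^{r+1}\simeq K(\mathbb{P}(\mathcal{E}))$ and $K(X)^{r+1}\simeq K(\mathbb{P}(f^{*}\mathcal{E}))$, then conclude by the five lemma on the induced ladder of long exact sequences. Your additional remarks on the compatibility of the classes $[\mathcal{O}(-i)]$ with pullback and on the fact that flatness of $f$ is not needed here are correct refinements of what the paper leaves implicit.
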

 
 \begin{proof}
  By Theorem V.1.5 of \cite{wei 1}, we have an equivalence $K(S)^{r+1}\simeq K(\mathbb{P}(\mathcal{E})).$ Consider the following commutative diagram of $K$-theory spectra
  $$\begin{CD}
   K(f)^{r+1} @>>> K(S)^{r+1} @>>> K(X)^{r+1}\\
   @VVV       @VV\simeq V          @VV\simeq V \\
   K(\mathbb{P}(f)) @>>> K(\mathbb{P}(\mathcal{E})) @>>>  K(\mathbb{P}(f^{*}\mathcal{E})).
  \end{CD}$$
 
This induces a commutative diagram of long exact sequences
\small
$$\begin{CD}
 \dots @>>> K_{n+1}(X)^{r+1} @>>> K_{n}(f)^{r+1} @>>> K_{n}(S)^{r+1} @>>> K_{n}(X)^{r+1} @>>> \dots \\
 @.   @VV\cong V    @VVV      @VV\cong V       @VV\cong V    @.     \\
 \dots @>>> K_{n+1}(\mathbb{P}(f^{*}\mathcal{E})) @>>> K_{n}(\mathbb{P}(f)) @>>> K_{n}(\mathbb{P}(\mathcal{E})) @>>> K_{n}(\mathbb{P}(f^{*}\mathcal{E})) @>>> \dots.
\end{CD}$$
\small  Hence the assertion.
 \end{proof}
The rest of the paper is dedicated to proving the projective bundle formula for $K_{0}^{He}.$

\section{Mumford-regular bundles}
Let $\mathcal{E}$ be a vector bundle of finite rank over a quasi-projective scheme $X.$ Let $\mathbb{P}= \mathbb{P}(\mathcal{E})$ be the associated projective space bundle. There is a natural map $\pi=\pi_{X}: \mathbb{P}(\mathcal{E}) \to X.$ Some relevant details can be found in \cite[Chapter  8]{GT}.

A quasi-coherent $\mathcal{O}_{\mathbb{P}}$-module $\mathcal{F}$ is said to be { \it Mumford-regular} if for all $q>0$ the higher derived sheaves $R^{q}\pi_{*}(\mathcal{F}(-q))=0.$ Here $\mathcal{F}(n)$ is the twisted sheaf $\mathcal{F}\otimes \mathcal{O}_{\mathbb{P}}(n).$ 

 We now recall some known results pertaining to Mumford-regular modules. For details, we refer to \cite[Section 8]{DQ} and \cite[Chapter II.8]{wei 1}.

\begin{lemma}\label{besic1}
 If $\mathcal{F}$ is Mumford-regular, then:
 \begin{enumerate}
  \item The twist $\mathcal{F}(n)$ are Mumford-regular for all $n\geq 0.$
  \item The canonical map $\varepsilon: \pi^{*}\pi_{*}(\mathcal{F}) \to \mathcal{F}$ is onto.
 \end{enumerate}
\end{lemma}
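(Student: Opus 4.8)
The plan is to reduce to the classical Mumford--regularity statements over a projective space $\mathbb{P}^{r}_{R}$ and then run Mumford's induction on the relative dimension $r$. Both conclusions are Zariski-local on the base $X$: the formation of $R^{q}\pi_{*}$ and of the twist $\mathcal{F}(n)$ is compatible with restriction to an open $U\subseteq X$ (since $\pi$ is the projection of a projective bundle and $\mathcal{O}_{\mathbb{P}}(1)$ is stable under base change), so ``Mumford-regular'' is a local condition on $X$; and surjectivity of the sheaf map $\varepsilon$ may likewise be checked locally. Hence we may assume $X=\operatorname{Spec} R$ is affine and $\mathcal{E}\cong\mathcal{O}_{X}^{r+1}$, so that $\mathbb{P}=\mathbb{P}^{r}_{R}$ with $\pi$ the structure map and $R[x_{0},\dots,x_{r}]$ the homogeneous coordinate ring.

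Next I would set up the simultaneous induction on $r$ establishing, for a Mumford-regular quasi-coherent $\mathcal{F}$ on $\mathbb{P}^{r}_{R}$: (i) $\mathcal{F}(1)$ is again Mumford-regular (so that, iterating, $\mathcal{F}(n)$ is Mumford-regular for all $n\geq 0$, which is (1)); (ii) the multiplication maps $\pi_{*}\mathcal{F}(n)\otimes_{R}\pi_{*}\mathcal{O}_{\mathbb{P}}(1)\to\pi_{*}\mathcal{F}(n+1)$ are surjective for all $n\geq 0$; (iii) $\varepsilon\colon\pi^{*}\pi_{*}\mathcal{F}\to\mathcal{F}$ is onto. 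The base case $r=0$ is trivial since then $\pi=\operatorname{id}$. For the inductive step, let $H=\mathbb{P}^{r-1}_{R}\hookrightarrow\mathbb{P}^{r}_{R}$ be the coordinate hyperplane $\{x_{0}=0\}$, with ideal sheaf $\mathcal{O}_{\mathbb{P}}(-1)$, and tensor the sequence $0\to\mathcal{O}_{\mathbb{P}}(-1)\xrightarrow{x_{0}}\mathcal{O}_{\mathbb{P}}\to\mathcal{O}_{H}\to 0$ with $\mathcal{F}(n)$ to obtain
$$0\to\mathcal{T}(n-1)\to\mathcal{F}(n-1)\xrightarrow{x_{0}}\mathcal{F}(n)\to\mathcal{F}_{H}(n)\to 0,$$
where $\mathcal{F}_{H}=\mathcal{F}\otimes_{\mathcal{O}_{\mathbb{P}}}\mathcal{O}_{H}$ and $\mathcal{T}$ is a quasi-coherent $\mathcal{O}_{H}$-module (the $x_{0}$-torsion of $\mathcal{F}$). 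Splitting this into two short exact sequences, applying $R\pi_{*}$, and chasing the resulting long exact sequences, together with the inductive hypothesis applied on $H$ to $\mathcal{F}_{H}$ (which one checks is Mumford-regular, with $\pi^{H}_{*}\mathcal{F}_{H}(q)$ linked to the $\pi_{*}$ of twists of $\mathcal{F}$), lets one push the vanishing $R^{q}\pi_{*}\mathcal{F}(-q)=0$ up by one twist and prove the surjectivity statements. Finally, the equivalence between quasi-coherent sheaves on $\mathbb{P}^{r}_{R}$ and their graded modules $\bigoplus_{n\geq 0}\pi_{*}\mathcal{F}(n)$ over $R[x_{0},\dots,x_{r}]$ turns (ii) into the statement that this module is generated in degree $0$, which is exactly (iii).

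The main obstacle is that $x_{0}$ need not be a non-zero-divisor on a general quasi-coherent $\mathcal{F}$, so the torsion term $\mathcal{T}$ genuinely appears and must be controlled; the resolution is precisely to observe that $\mathcal{T}$ is an $\mathcal{O}_{H}$-module supported on $H$, so its higher direct images are governed by the $(r-1)$-dimensional case, and to feed this into the descending induction on the twist. A secondary technical point is passing from coherent to quasi-coherent $\mathcal{F}$ over a possibly non-Noetherian $R$, which one handles by writing $\mathcal{F}$ as a filtered colimit of finitely presented quasi-coherent subsheaves and using that $R^{q}\pi_{*}$ commutes with filtered colimits on $\mathbb{P}^{r}_{R}$. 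All of this is standard, so in the write-up I would indicate the reduction and the shape of the induction and otherwise refer to \cite[Chapter II.8]{wei 1} and \cite[Section 8]{DQ}.
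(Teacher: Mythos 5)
The paper itself offers no argument here: its ``proof'' is a citation to \cite[Proposition II.8.7.3]{wei 1}, so you are attempting strictly more than the paper does. Your reductions are fine (localizing on $X$ to reach $\mathbb{P}^{r}_{R}$, and the filtered-colimit remark for non-Noetherian $R$), but the core of your induction has a genuine gap. You run Mumford's hyperplane-section argument with the \emph{fixed} coordinate hyperplane $H=\{x_{0}=0\}$, and you correctly flag the $x_{0}$-torsion $\mathcal{T}$ as the obstacle, but your proposed resolution --- that $\mathcal{T}$ is an $\mathcal{O}_{H}$-module and hence ``governed by the $(r-1)$-dimensional case'' --- does not close it. Being supported on $H\cong\mathbb{P}^{r-1}_{R}$ only gives $R^{q}\pi_{*}\mathcal{T}=0$ for $q>r-1$; it gives nothing in the middle range. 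Concretely, to get Mumford-regularity of $\mathcal{F}_{H}$ on $H$ one has to kill terms of the form $R^{q+2}\pi_{*}\bigl(\mathcal{T}(-q-1)\bigr)$ for $0<q\leq r-3$, and there is no regularity hypothesis on $\mathcal{T}$ that does this; the chase already fails to close for $r\geq 4$. Nor can you evade the torsion by moving the hyperplane: over a base as simple as $R=k[t]$ a coherent sheaf on $\mathbb{P}^{r}_{R}$ can have associated points meeting every $R$-rational hyperplane, so $x_{0}$ cannot in general be arranged to be a non-zero-divisor on $\mathcal{F}$, even after reducing to finitely presented $\mathcal{F}$. This is exactly the point where Mumford's field-case proof does not transport to a general base.

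The proof in the sources the paper points to (Quillen \cite[\S 8]{DQ}, reproduced in \cite[II.8.7.3]{wei 1}) replaces the single hyperplane by the Koszul resolution
$$0\to(\wedge^{r+1}\pi^{*}\mathcal{E})(-r-1)\to\cdots\to(\pi^{*}\mathcal{E})(-1)\to\mathcal{O}_{\mathbb{P}}\to 0$$
attached to the canonical surjection $\pi^{*}\mathcal{E}\to\mathcal{O}_{\mathbb{P}}(1)$. Because this is an exact complex of vector bundles, it remains exact after tensoring with any quasi-coherent $\mathcal{F}(n)$, and splitting it into short exact sequences and applying $R\pi_{*}$ (with the projection formula) yields $R^{q}\pi_{*}\mathcal{F}(1-q)=0$ for $q>0$ using \emph{only} the hypothesis $R^{p}\pi_{*}\mathcal{F}(-p)=0$ for $p>0$; iterating gives (1), and the same resolution gives the surjectivity of $\pi_{*}\mathcal{F}(n)\otimes\mathcal{E}\to\pi_{*}\mathcal{F}(n+1)$ and hence (2) via the graded-module description, as in your step (iii). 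If you want to sketch a proof rather than cite one, that is the induction to sketch; as written, your hyperplane version is not repairable without additional input on $\mathcal{T}$.
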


\begin{proof}
 See Proposition II.8.7.3 of \cite{wei 1}. \end{proof}

\begin{lemma}\label{basic3}
 The functor $\pi_{*}$ is exact from Mumford-regular modules to $\mathcal{O}_{X}$-modules.
\end{lemma}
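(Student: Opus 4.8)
The plan is to reduce everything to the left-exactness of the direct image functor together with a single vanishing statement about $R^{1}\pi_{*}$. Recall that $\pi_{*}$ is always left exact on $\mathcal{O}_{\mathbb{P}}$-modules (it is a right adjoint, namely the direct image along the morphism of ringed spaces $\pi$), so the only point that needs argument is that $\pi_{*}$ carries an epimorphism of Mumford-regular modules to an epimorphism of $\mathcal{O}_{X}$-modules.

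Concretely, I would take a short exact sequence $0 \to \mathcal{F}' \to \mathcal{F} \to \mathcal{F}'' \to 0$ in ${\bf{Mod}}(\mathbb{P})$ all of whose terms are Mumford-regular, apply $\pi_{*}$, and pass to the long exact sequence of higher direct images
$$0 \to \pi_{*}\mathcal{F}' \to \pi_{*}\mathcal{F} \to \pi_{*}\mathcal{F}'' \to R^{1}\pi_{*}\mathcal{F}' \to \cdots .$$
It then suffices to show $R^{1}\pi_{*}\mathcal{F}' = 0$. This is where Mumford-regularity of $\mathcal{F}'$ is used, through twist-stability: by Lemma \ref{besic1}(1) the twist $\mathcal{F}'(1)$ is again Mumford-regular, and writing $\mathcal{F}' = (\mathcal{F}'(1))(-1)$, the defining vanishing $R^{q}\pi_{*}(\mathcal{G}(-q)) = 0$ for $q > 0$ applied to $\mathcal{G} = \mathcal{F}'(1)$ and $q = 1$ gives $R^{1}\pi_{*}\mathcal{F}' = R^{1}\pi_{*}((\mathcal{F}'(1))(-1)) = 0$. (The same trick, with $\mathcal{F}'(n) = (\mathcal{F}'(n+q))(-q)$ for $n \geq -q$, shows more generally $R^{q}\pi_{*}(\mathcal{F}'(n)) = 0$ for all $q > 0$ and $n \geq -q$, the standard consequence of Castelnuovo–Mumford regularity, for which one may cite \cite[Chapter II.8]{wei 1}.) Hence $\pi_{*}\mathcal{F} \to \pi_{*}\mathcal{F}''$ is surjective and $0 \to \pi_{*}\mathcal{F}' \to \pi_{*}\mathcal{F} \to \pi_{*}\mathcal{F}'' \to 0$ is exact, which is the claim.

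I do not expect a genuine obstacle here: the entire content sits in the twist-stability of Mumford-regularity (Lemma \ref{besic1}(1)) and the left-exactness of $\pi_{*}$, and the rest is bookkeeping with the long exact sequence. The only mild subtlety worth a sentence is the precise meaning of ``exact functor'' on the class of Mumford-regular modules; for the use made of this lemma later it is enough that $\pi_{*}$ preserves short exact sequences all of whose terms are Mumford-regular, which is exactly what the argument above establishes.
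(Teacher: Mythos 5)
Your argument is correct and is essentially the standard one: the paper itself gives no proof here but simply cites Lemma II.8.7.4 of \cite{wei 1}, and the proof there is exactly your reduction to left-exactness of $\pi_{*}$ plus the vanishing $R^{1}\pi_{*}\mathcal{F}' = R^{1}\pi_{*}\bigl((\mathcal{F}'(1))(-1)\bigr)=0$ obtained from twist-stability (Lemma \ref{besic1}(1)) and the definition of Mumford-regularity. Nothing further is needed.
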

\begin{proof}
 See Lemma II.8.7.4 of \cite{wei 1}. \end{proof}

\begin{lemma}\label{basic2}
Let $\mathcal{F}$ be a vector bundle on $\mathbb{P}.$
 \begin{enumerate}
  \item $\mathcal{F}(n)$ is a Mumford-regular vector bundle on $\mathbb{P}$ for all large enough $n.$
  \item If $\mathcal{F}$ is Mumford-regular, then $\pi_{*}\mathcal{F}$ is a vector bundle on $X.$
 \end{enumerate}
\end{lemma}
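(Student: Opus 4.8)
The plan is to reduce both parts to statements about projective space over a ring and then to invoke two standard tools: relative Serre vanishing (together with the vanishing of $R^q\pi_*$ above the fibre dimension) for part~(1), and the theorem on cohomology and base change for part~(2).

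For part~(1): since $\mathcal{O}_{\mathbb{P}}(1)$ is a line bundle, $\mathcal{F}(n)=\mathcal{F}\otimes\mathcal{O}_{\mathbb{P}}(n)$ is a vector bundle for every $n$, so only Mumford-regularity of the twist is at issue; by definition this amounts to $R^q\pi_*\bigl(\mathcal{F}(n-q)\bigr)=0$ for all $q>0$. First I would note that, $\pi$ being proper, $\mathcal{O}_{\mathbb{P}}(1)$ being $\pi$-ample, and $X$ being quasi-compact, relative Serre vanishing provides an integer $N$ with $R^q\pi_*(\mathcal{F}(m))=0$ for all $q>0$ and all $m\ge N$. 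Second, since $\pi$ has fibres of dimension $r$, one has $R^q\pi_*=0$ on quasi-coherent sheaves for $q>r$ (e.g.\ by a \v{C}ech computation on the $r+1$ standard charts after localising on $X$). Combining the two: if $n\ge N+r$, then for $1\le q\le r$ one has $n-q\ge N$ and hence $R^q\pi_*(\mathcal{F}(n-q))=0$, while for $q>r$ this group vanishes automatically; thus $\mathcal{F}(n)$ is Mumford-regular for all $n\ge N+r$.

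For part~(2): the assertion is local on $X$, so I may take $X$ affine and, shrinking further, $\mathcal{E}$ free and $\mathbb{P}=\mathbb{P}^r_X$. The crucial point is that a Mumford-regular vector bundle has \emph{no} higher direct images: by Lemma~\ref{besic1}(1) every twist $\mathcal{F}(n)$ with $n\ge 0$ is Mumford-regular, which unwinds to $R^q\pi_*(\mathcal{F}(m))=0$ for all $q\ge 1$ and all $m\ge -q$, and in particular $R^q\pi_*\mathcal{F}=0$ for $q>0$. Moreover $\mathcal{F}$ is flat over $X$, being a locally free sheaf on $\mathbb{P}$ and $\mathbb{P}$ being smooth over $X$. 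Feeding these facts into cohomology and base change: the vanishing of $R^{q+1}\pi_*\mathcal{F}$ forces the base-change maps $R^q\pi_*\mathcal{F}\otimes k(x)\to H^q(\mathbb{P}_x,\mathcal{F}_x)$ to be surjective, so $H^q(\mathbb{P}_x,\mathcal{F}_x)=0$ for all $q>0$ and all $x\in X$; then flatness of $\mathcal{F}$ together with $R^1\pi_*\mathcal{F}=0$ makes the formation of $\pi_*\mathcal{F}$ commute with base change and exhibits $\pi_*\mathcal{F}$ as locally free of finite rank. (One could also simply quote this from \cite[Chapter~II.8]{wei 1}.)

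I expect the only real obstacle to be the bookkeeping in part~(2): one has to check carefully that the hypotheses for cohomology and base change are in force — the flatness of $\mathcal{F}$ over $X$, and the step from ``all higher $R^q\pi_*$ vanish'' to ``all higher cohomology of every fibre vanishes'' — and, if one wishes to avoid a noetherian hypothesis on $X$, either to reduce to the noetherian case by a limit argument or to use the version of cohomology and base change available for proper morphisms in general. Part~(1) is routine once relative Serre vanishing and the fibre-dimension bound are available.
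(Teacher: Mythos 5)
Your proposal is correct, but it is worth saying up front that the paper itself contains no argument for this lemma: the ``proof'' is a bare citation of Lemma~II.8.7.5 of \cite{wei 1}, so you are necessarily supplying something the paper does not. Your part~(1) is essentially the standard argument (relative Serre vanishing plus the bound $R^{q}\pi_{*}=0$ for $q>r$, with quasi-compactness of $X$ giving a uniform $N$), and it is fine. For part~(2) you take a genuinely different route from the one in the cited source: the Quillen--Weibel treatment gets local freeness of $\pi_{*}\mathcal{F}$ out of the exactness of $\pi_{*}$ on Mumford-regular modules (the paper's Lemma~\ref{basic3}) combined with the projection formula, with no cohomology-and-base-change input, whereas you extract $R^{q}\pi_{*}\mathcal{F}=0$ for all $q>0$ from Lemma~\ref{besic1}(1) (correctly: regularity of $\mathcal{F}(n)$ for all $n\ge 0$ unwinds to $R^{q}\pi_{*}(\mathcal{F}(m))=0$ for $m\ge -q$, which includes $m=0$) and feed it into the base-change machinery. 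That route does work: once one has a bounded complex of finite projectives computing the cohomology of $\mathcal{F}$ universally, vanishing of its cohomology in positive degrees splits it down, by descending induction, to a finite projective module in degree $0$, namely $\pi_{*}\mathcal{F}$, so your ``fibrewise vanishing'' detour is even slightly more than you need. The one point you must actually discharge is the one you flag yourself: the paper's standing hypotheses allow $S$, hence $X$, to be non-noetherian, so the textbook noetherian versions of Serre vanishing and of cohomology and base change do not apply verbatim; since $\mathcal{F}$ is finitely presented and $\mathbb{P}(\mathcal{E})\to X$ is of finite presentation, either the limit/approximation argument or the finitely-presented versions of these theorems close this, but in a written-out proof that reduction would have to appear. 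In exchange for this extra care, your argument is self-contained and also yields that $\pi_{*}\mathcal{F}$ commutes with arbitrary base change, which the projection-formula argument does not give directly.
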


\begin{proof}
 See Lemma II.8.7.5 of \cite{wei 1}. \end{proof}

Let $f: X \to S$ be a map of quasi-projective schemes and $\mathcal{E}$ be a vector bundle of finite rank on $S.$ Then we have a commutative diagram (\ref{basicdiagram1}). Let ${\bf {MR}(\mathbb{P}(\mathcal{E}))}$ denote the category of Mumford-regular vector bundles. Now, we define a category  ${\bf{MR}}(\mathbb{P}(f))$ whose objects are triples $(\mathcal{F}_{1}, \alpha, \mathcal{F}_{2})$ with $\mathcal{F}_{1}, \mathcal{F}_{2} \in {\bf {MR}(\mathbb{P}(\mathcal{E}))}$ and $\alpha: \mathbb{P}(f)^{*}\mathcal{F}_{1}\cong \mathbb{P}(f)^{*}\mathcal{F}_{2}$ in ${\bf {MR}(\mathbb{P}(\mathcal{E}))}.$ A morphism $(\mathcal{F}_{1}, \alpha, \mathcal{F}_{2}) \to (\mathcal{F}_{1}^{'}, \alpha^{'}, \mathcal{F}_{2}^{'})$ is a pair of maps $u: \mathcal{F}_{1} \to \mathcal{F}_{1}^{'}$, $v: \mathcal{F}_{2} \to \mathcal{F}_{2}^{'}$ in ${\bf {MR}(\mathbb{P}(\mathcal{E}))}$ such that $\alpha^{'} \mathbb{P}(f)^{*}u= \mathbb{P}(f)^{*}v~ \alpha$.

Assume that $f: X \to S$ is a flat map. Let $(\mathcal{F}_{1}, \alpha, \mathcal{F}_{2} )\in {\bf {MR}}(\mathbb{P}(f)).$ By Lemma \ref{besic1}(2), there are canonical onto  maps $\varepsilon_{i}: \pi_{S}^{*}\pi_{S*}(\mathcal{F}_{i}) \to \mathcal{F}_{i}$ for $i=1, 2.$ 
We also have
$$\mathbb{P}(f)^{*}\pi_{S}^{*}\pi_{S*}(\mathcal{F}_{i})= \pi_{X}^{*}f^{*}\pi_{S*}\mathcal{F}_{i}\cong \pi_{X}^{*}\pi_{X*}\mathbb{P}(f)^{*}\mathcal{F}_{i} ~~{\rm {for}}~~ i=1,2,$$ where the first equality by the commutativity of the diagram (\ref{basicdiagram1}) and the second isomorphism by the flat base change theorem (see Lemma 30.5.2 of \cite{sp}). Thus, we get an isomorphism $\mathbb{P}(f)^{*}\pi_{S}^{*}\pi_{S*}(\mathcal{F}_{1})\cong \mathbb{P}(f)^{*}\pi_{S}^{*}\pi_{S*}(\mathcal{F}_{2})$ and is denoted by $\pi_{X}^{*}\pi_{X*}\alpha.$ Since $\pi_{S}$ is quasi-compact and separated, $\pi_{S}^{*}\pi_{S*}(\mathcal{F}_{i})\in {\bf {MR}(\mathbb{P}(\mathcal{E}))}$  for $i= 1, 2$ by Example II.8.7.2 of \cite{wei 1}. This implies that $$(\pi_{S}^{*}\pi_{S*}\mathcal{F}_{1}, \pi_{X}^{*}\pi_{X*}\alpha, \pi_{S}^{*}\pi_{S*}\mathcal{F}_{2}) \in {\bf{MR}}(\mathbb{P}(f)).$$ Note that the canonical map $\varepsilon: \pi^{*}\pi_{*}(\mathcal{F}) \to \mathcal{F}$ is natural in $\mathcal{F}.$ So, the diagram 
\begin{equation}\label{basicdiagram}
 \begin{CD}
     \mathbb{P}(f)^{*}\pi_{S}^{*}\pi_{S*}\mathcal{F}_{1}\cong \pi_{X}^{*}\pi_{X*}\mathbb{P}(f)^{*}\mathcal{F}_{1} @> \mathbb{P}(f)^{*}\varepsilon_{1} >> \mathbb{P}(f)^{*}\mathcal{F}_{1}\\
    @V \pi_{X}^{*}\pi_{X*}\alpha VV  @V \alpha VV \\                 
  \mathbb{P}(f)^{*}\pi_{S}^{*}\pi_{S*}\mathcal{F}_{2}\cong \pi_{X}^{*}\pi_{X*}\mathbb{P}(f)^{*}\mathcal{F}_{2} @> \mathbb{P}(f)^{*}\varepsilon_{2} >> \mathbb{P}(f)^{*}\mathcal{F}_{2}
\end{CD}
\end{equation} is commutative by the naturality of $\mathbb{P}(f)^{*}\varepsilon$.  Hence we get the following (by Lemma \ref{exact seq}):

\begin{lemma}\label{surjection}
  $(\varepsilon_{1}, \varepsilon_{2}): (\pi_{S}^{*}\pi_{S*}\mathcal{F}_{1}, \pi_{X}^{*}\pi_{X*}\alpha, \pi_{S}^{*}\pi_{S*}\mathcal{F}_{2}) \to (\mathcal{F}_{1}, \alpha, \mathcal{F}_{2})$ is a morphism in  ${\bf{MR}}(\mathbb{P}(f))$ and it is onto. Here $f: X \to S$ is a flat map.
\end{lemma}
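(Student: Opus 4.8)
The plan is to verify the two assertions of Lemma \ref{surjection} separately, since the commutative square \eqref{basicdiagram} together with Lemma \ref{exact seq} essentially does all the work. First I would observe that the pair $(\varepsilon_1,\varepsilon_2)$ is a morphism in ${\bf MR}(\mathbb{P}(f))$: the objects on both sides lie in ${\bf MR}(\mathbb{P}(f))$ (the source was shown to lie there just above, using Example II.8.7.2 of \cite{wei 1} and flat base change, and the target is given), and the defining compatibility condition $\alpha \circ \mathbb{P}(f)^{*}\varepsilon_1 = \mathbb{P}(f)^{*}\varepsilon_2 \circ (\pi_X^{*}\pi_{X*}\alpha)$ is precisely the commutativity of diagram \eqref{basicdiagram}, which holds by the naturality of the counit $\varepsilon: \pi^{*}\pi_{*} \to \mathrm{id}$ applied to the isomorphism $\alpha$ (and the identification $\mathbb{P}(f)^{*}\pi_S^{*}\pi_{S*} \cong \pi_X^{*}\pi_{X*}\mathbb{P}(f)^{*}$ coming from \eqref{basicdiagram1} and flat base change).

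For surjectivity, I would invoke Lemma \ref{exact seq}: a morphism $(u,v)$ in ${\bf MR}(\mathbb{P}(f))$ (viewed inside the ambient additive/exact structure) is onto exactly when both components $u$ and $v$ are onto in the underlying category. Here the two components are $\varepsilon_1: \pi_S^{*}\pi_{S*}\mathcal{F}_1 \to \mathcal{F}_1$ and $\varepsilon_2: \pi_S^{*}\pi_{S*}\mathcal{F}_2 \to \mathcal{F}_2$, and each of these is onto by Lemma \ref{besic1}(2), since $\mathcal{F}_1$ and $\mathcal{F}_2$ are Mumford-regular by hypothesis. Hence $(\varepsilon_1,\varepsilon_2)$ is onto in ${\bf MR}(\mathbb{P}(f))$.

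The one point that requires a little care — and which I expect to be the main (mild) obstacle — is making precise the sense in which ``onto'' is meant in ${\bf MR}(\mathbb{P}(f))$, since ${\bf MR}(\mathbb{P}(\mathcal{E}))$ is only an exact category, not abelian, so one should check that $\varepsilon_i$ is not merely an epimorphism but an admissible epimorphism whose kernel data assembles into an object of ${\bf MR}(\mathbb{P}(f))$. This is where flatness of $f$ enters again, exactly as in the discussion preceding Lemma \ref{exact seq}: flatness guarantees that $\ker(\mathbb{P}(f)^{*}\varepsilon_1) \cong \mathbb{P}(f)^{*}\ker(\varepsilon_1)$ compatibly with $\alpha$, so that $(\ker \varepsilon_1, \widetilde{\alpha}, \ker \varepsilon_2)$ is a legitimate object and $0 \to \ker(\varepsilon_1,\varepsilon_2) \to (\pi_S^{*}\pi_{S*}\mathcal{F}_1,\pi_X^{*}\pi_{X*}\alpha,\pi_S^{*}\pi_{S*}\mathcal{F}_2) \xrightarrow{(\varepsilon_1,\varepsilon_2)} (\mathcal{F}_1,\alpha,\mathcal{F}_2) \to 0$ is exact in the relevant sense. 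Beyond this bookkeeping, everything reduces to the cited lemmas and no genuine computation is needed.
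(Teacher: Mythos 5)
Your proposal is correct and follows essentially the same route as the paper: the morphism property comes from the commutativity of diagram \eqref{basicdiagram} (naturality of the counit plus the flat base change identification), and ontoness is checked componentwise via Lemma \ref{exact seq} using Lemma \ref{besic1}(2). Your extra remark about admissible epimorphisms is more care than the paper takes — it simply reads ``onto'' in the ambient abelian category ${\bf{Mod}}(\mathbb{P}(f))$ — but it does no harm.
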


\section{Relative version of Quillen's Resolution Theorem}
In this section, we prove a relative version of Quillen's resolution theorem which will play an important role in the later part of the paper. Throughout this section, $f: X \to S$ is a flat map of quasi-projective schemes.

First, we recall some notations from \cite{DQ} and \cite{wei 1}. Given a Mumford-regular $\mathcal{O}_{\mathbb{P}}$-module $\mathcal{F},$ we define a sequence of $\mathcal{O}_{X}$-modules $T_{n}= T_{n}\mathcal{F}$ and $\mathcal{O}_{\mathbb{P}}$-modules $Z_{n}= Z_{n}\mathcal{F}$ as follows. Starting with $T_{0}\mathcal{F}=\pi_{*}\mathcal{F}$ and $Z_{-1}= \mathcal{F}.$ Since $\mathcal{F}$ is Mumford-regular, there is a canonical onto map  $\varepsilon: \pi^{*}\pi_{*}(\mathcal{F}) \to \mathcal{F}$ (see Lemma \ref{besic1}). Let $Z_{0}\mathcal{F}= \ker \varepsilon.$ So, we get an exact sequence
$$ 0 \to Z_{0}\mathcal{F} \to \pi^{*}T_{0}\mathcal{F} \to Z_{-1}\mathcal{F}\to 0.$$ Inductively, we define 
$$ T_{n}\mathcal{F}= \pi_{*}Z_{n-1}(n),~  Z_{n}\mathcal{F}= \ker(\varepsilon)(-n),$$ where $\varepsilon$  is the canonical map $\pi^{*}\pi_{*}Z_{n-1}(n) \to Z_{n-1}(n).$ Therefore, we have sequences
\begin{equation}\label{Resolution seq}
 0 \to Z_{n}(n) \to \pi^{*}T_{n}\mathcal{F} \to Z_{n-1}(n)\to 0,
\end{equation}
which are exact except possibly at $Z_{n-1}(n).$ Now we state a result which is known as the Quillen Resolution theorem.

\begin{theorem}\label{QRT}
 Let $\mathcal{F}$ be a vector bundle on $\mathbb{P}(\mathcal{E}),$ rank$(\mathcal{E})= r+1.$ If $\mathcal{F}$ is Mumford-regular, then $Z_{r}=0,$ and the sequences (\ref{Resolution seq}) are exact for $n\geq 0,$ so there is an exact sequence 
 \begin{equation}
  0 \to (\pi^{*}T_{r}\mathcal{F})(-r) \stackrel{\varepsilon(-r)}\to \dots \to (\pi^{*}T_{i}\mathcal{F})(-i)\stackrel{\varepsilon(-i)}\to \dots \stackrel{\varepsilon(-1)}\to \pi^{*}T_{0}\mathcal{F} \to Z_{-1}\mathcal{F}\to 0.
 \end{equation}
\end{theorem}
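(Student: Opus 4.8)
The plan is to adapt the classical proof of Quillen's resolution theorem (as in \cite[II.8.7]{wei 1} or \cite[\S8]{DQ}) with only cosmetic changes, since all the work has already been set up in the definitions of $T_n\mathcal{F}$ and $Z_n\mathcal{F}$. The two things that must be proved are: (a) the sequences (\ref{Resolution seq}) are genuinely exact (not merely exact except at $Z_{n-1}(n)$) for all $n \geq 0$, and (b) $Z_r = 0$. Given these, splicing the short exact sequences (\ref{Resolution seq}) together (after applying the twist $(-n)$ to the $n$-th one, which is exact since twisting is an exact autoequivalence of $\mathbf{Mod}(\mathbb{P})$) yields the asserted long exact resolution, with the maps being the composites $\pi^*T_n\mathcal{F}(-n) \twoheadrightarrow Z_{n-1}(n)(-n) = Z_{n-1} \hookrightarrow \pi^*T_{n-1}\mathcal{F}(-(n-1))$, i.e.\ $\varepsilon(-n)$ followed by the inclusion.

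First I would record that exactness of (\ref{Resolution seq}) at $Z_{n-1}(n)$ is equivalent to surjectivity of $\varepsilon\colon \pi^*\pi_*Z_{n-1}(n) \to Z_{n-1}(n)$, which by Lemma \ref{besic1}(2) holds as soon as $Z_{n-1}(n)$ is Mumford-regular. So the real engine is an inductive claim: for $0 \le n \le r$, the sheaf $Z_{n-1}$ is a vector bundle on $\mathbb{P}$, the twist $Z_{n-1}(n)$ is Mumford-regular, and moreover $Z_{n-1}(m)$ is Mumford-regular for all $m \ge n$ (this last uniformity is what makes the induction go through). The base case $n=0$ is the hypothesis that $\mathcal{F} = Z_{-1}$ is a Mumford-regular vector bundle together with Lemma \ref{besic1}(1). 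For the inductive step one uses the defining short exact sequence $0 \to Z_n(n) \to \pi^*T_n\mathcal{F} \to Z_{n-1}(n) \to 0$: here $\pi^*T_n\mathcal{F} = \pi^*\pi_*Z_{n-1}(n)$ is Mumford-regular by \cite[Example II.8.7.2]{wei 1} and is a vector bundle by Lemma \ref{basic2}(2) (since $Z_{n-1}(n)$ is a Mumford-regular vector bundle, its pushforward is a bundle, hence so is its pullback), $Z_{n-1}(n)$ is a vector bundle by induction, so $Z_n(n)$ is the kernel of a surjection of bundles, hence a vector bundle; and then a cohomology long exact sequence argument (applying $R^q\pi_*$ to the twist of this sequence by $\mathcal{O}(m-q)$ and using that the outer two terms have vanishing higher direct images in the relevant range) shows $Z_n(n+1) = Z_n(n)(1)$, and more generally $Z_n(m)$ for $m \ge n+1$, is Mumford-regular. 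This is the standard dimension-shifting/regularity-propagation argument.

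The vanishing $Z_r = 0$ is the one genuinely global input and the step I expect to be the main obstacle to state cleanly, though it is again classical: it comes from the fact that $\mathbb{P}(\mathcal{E}) \to X$ is a $\mathbb{P}^r$-bundle, so locally on $X$ the resolution is the Koszul-type / Beilinson-type resolution of a regular sheaf on $\mathbb{P}^r$, which has length exactly $r$; equivalently one checks that the twisted Euler-characteristic computation forces $T_{r+1}\mathcal{F} = \pi_*Z_r(r+1) = 0$ and then that $Z_r$, being a bundle all of whose twists $Z_r(m)$ ($m \ge r+1$) are Mumford-regular with vanishing pushforward, must itself vanish (a regular sheaf with $\pi_*$ of all high twists zero is zero, since $\varepsilon$ is surjective). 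I would import this verbatim from \cite[Theorem II.8.7.6]{wei 1} / \cite{DQ}, since nothing about the relative setup $f\colon X \to S$ enters here — Theorem \ref{QRT} is purely a statement about a single $\mathbb{P}^r$-bundle $\mathbb{P}(\mathcal{E}) \to X$ and a single Mumford-regular bundle $\mathcal{F}$ on it, with the flatness of $f$ playing no role. Finally I would note that $T_i\mathcal{F} = \pi_*Z_{i-1}(i)$ is a vector bundle on $X$ by Lemma \ref{basic2}(2), so the resolution is by bundles of the form $(\pi^*(\text{bundle on } X))(-i)$, as stated.
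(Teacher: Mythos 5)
Your proposal is a correct outline of the classical argument, and it coincides with the paper's approach: the paper's entire proof of Theorem \ref{QRT} is the citation ``See Theorem 8.7.8 of \cite{wei 1},'' i.e.\ exactly the Quillen--Weibel resolution theorem whose structure (surjectivity of $\varepsilon$ via regularity of $Z_{n-1}(n)$, regularity propagation to the twists of $Z_n$, and the vanishing $Z_r=0$) you reconstruct. Your observation that the relative setup and the flatness of $f$ play no role in this particular statement is also accurate.
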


\begin{proof}
 See Theorem 8.7.8 of \cite{wei 1}.
\end{proof}

Next, our goal is to prove a relative version of the above theorem. To do this let us first fix some notation.

For $n \in \mathbb{Z},$ we define the relative twist functor $(n)^{rel}: {\bf{Mod}}(\mathbb{P}(f)) \to {\bf{Mod}}(\mathbb{P}(f))$ by $$(n)^{rel}(\mathcal{F}_{1}, \alpha, \mathcal{F}_{2})= (\mathcal{F}_{1}, \alpha, \mathcal{F}_{2})(n):= (\mathcal{F}_{1}(n), \alpha(n), \mathcal{F}_{2}(n)),$$ where  $\mathbb{P}(f)$ as in diagram (\ref{basicdiagram1}) and $\alpha(n):= \alpha \otimes id: \mathbb{P}(f)^{*}\mathcal{F}_{1}\otimes \mathcal{O}_{\mathbb{P}(f^{*}\mathcal{E})}(n)\cong \mathbb{P}(f)^{*}\mathcal{F}_{2}\otimes \mathcal{O}_{\mathbb{P}(f^{*}\mathcal{E})}(n). $
   
   \begin{lemma}\label{shift exact}
    For $n \in \mathbb{Z},$ $(n)^{rel}$ is an exact functor on ${\bf{Mod}}(\mathbb{P}(f)).$
   \end{lemma}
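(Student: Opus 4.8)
The plan is to reduce the exactness of $(n)^{rel}$ on ${\bf{Mod}}(\mathbb{P}(f))$ to the exactness of the ordinary twist functor $\mathcal{G}\mapsto \mathcal{G}(n)=\mathcal{G}\otimes_{\mathcal{O}_{\mathbb{P}(\mathcal{E})}}\mathcal{O}_{\mathbb{P}(\mathcal{E})}(n)$ on ${\bf{Mod}}(\mathbb{P}(\mathcal{E}))$, which holds because $\mathcal{O}_{\mathbb{P}(\mathcal{E})}(n)$ is an invertible, hence flat, $\mathcal{O}_{\mathbb{P}(\mathcal{E})}$-module, so tensoring with it takes short exact sequences to short exact sequences.

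First I would note that since $f$ is flat throughout this section, its base change $\mathbb{P}(f)\colon \mathbb{P}(f^{*}\mathcal{E})\to \mathbb{P}(\mathcal{E})$ along $\pi_{S}$ is flat as well, so ${\bf{Mod}}(\mathbb{P}(f))$ is an abelian category with kernels and cokernels formed componentwise, and Lemma \ref{exact seq} applies verbatim with $f$ replaced by $\mathbb{P}(f)$: a sequence in ${\bf{Mod}}(\mathbb{P}(f))$ is exact if and only if both of its component sequences are exact in ${\bf{Mod}}(\mathbb{P}(\mathcal{E}))$. I would also check that $(n)^{rel}$ is a well-defined additive endofunctor. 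On objects this uses the canonical identification $\mathbb{P}(f)^{*}\mathcal{O}_{\mathbb{P}(\mathcal{E})}(1)\cong \mathcal{O}_{\mathbb{P}(f^{*}\mathcal{E})}(1)$, which gives $\mathbb{P}(f)^{*}(\mathcal{F}_{i}(n))\cong (\mathbb{P}(f)^{*}\mathcal{F}_{i})(n)$ naturally, so that $\alpha(n)=\alpha\otimes \mathrm{id}$ is again an isomorphism and $(\mathcal{F}_{1},\alpha,\mathcal{F}_{2})(n)\in {\bf{Mod}}(\mathbb{P}(f))$; on morphisms one sends $(u,v)$ to $(u(n),v(n))$, and the compatibility $\alpha'(n)\,\mathbb{P}(f)^{*}u(n)=\mathbb{P}(f)^{*}v(n)\,\alpha(n)$ follows from the one for $(u,v)$ by applying $-\otimes \mathcal{O}_{\mathbb{P}(\mathcal{E})}(n)$ and the naturality of the above identification. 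Additivity is immediate.

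Then, given a short exact sequence $0\to (\mathcal{F}_{1},\alpha,\mathcal{F}_{2})\to (\mathcal{F}_{1}',\alpha',\mathcal{F}_{2}')\to (\mathcal{F}_{1}'',\alpha'',\mathcal{F}_{2}'')\to 0$ in ${\bf{Mod}}(\mathbb{P}(f))$, Lemma \ref{exact seq} makes it equivalent to the exactness of $0\to \mathcal{F}_{i}\to \mathcal{F}_{i}'\to \mathcal{F}_{i}''\to 0$ in ${\bf{Mod}}(\mathbb{P}(\mathcal{E}))$ for $i=1,2$; applying the exact functor $-(n)$ keeps both sequences exact, and applying Lemma \ref{exact seq} in the reverse direction shows that the image of the original sequence under $(n)^{rel}$ is exact. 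Hence $(n)^{rel}$ is exact. There is no genuine obstacle here; the only point needing care is the verification that $(n)^{rel}$ actually lands in ${\bf{Mod}}(\mathbb{P}(f))$ and is functorial, i.e. that twisting commutes with the pullback $\mathbb{P}(f)^{*}$ up to natural isomorphism, which is the standard identification $\mathbb{P}(f)^{*}\mathcal{O}_{\mathbb{P}(\mathcal{E})}(1)\cong \mathcal{O}_{\mathbb{P}(f^{*}\mathcal{E})}(1)$ together with the projection formula.
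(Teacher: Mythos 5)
Your proposal is correct and follows exactly the paper's argument: the ordinary twist $(n)$ is exact on ${\bf{Mod}}(\mathbb{P}(\mathcal{E}))$ because $\mathcal{O}_{\mathbb{P}(\mathcal{E})}(n)$ is flat, and Lemma \ref{exact seq} (applied to $\mathbb{P}(f)$) transfers this componentwise to $(n)^{rel}$. The extra verification that $(n)^{rel}$ is a well-defined functor via $\mathbb{P}(f)^{*}\mathcal{O}_{\mathbb{P}(\mathcal{E})}(1)\cong \mathcal{O}_{\mathbb{P}(f^{*}\mathcal{E})}(1)$ is a welcome detail the paper leaves implicit, but the substance of the proof is the same.
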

   
   \begin{proof}
    Since $\mathcal{O}_{\mathbb{P}(\mathcal{E})}(n)$ is flat over $\mathcal{O}_{\mathbb{P}(\mathcal{E})},$  the twist $(n)$ is an exact functor on ${\bf{Mod}}(\mathbb{P}(\mathcal{E})).$ Hence the result follows by Lemma \ref{exact seq}.
   \end{proof}

Let $\mathcal{F}:= (\mathcal{F}_{1}, \alpha, \mathcal{F}_{2}) \in {\bf{MR}}(\mathbb{P}(f)).$  Since $f$ is a flat map, 
$$\pi_{X*}\alpha: f^{*}\pi_{S*}\mathcal{F}_{1}\cong \pi_{X*}\mathbb{P}(f)^{*}\mathcal{F}_{1}\cong \pi_{X*}\mathbb{P}(f)^{*}\mathcal{F}_{2}\cong f^{*}\pi_{S*}\mathcal{F}_{2}.$$

Let $$ T_{0}\alpha= \pi_{X*}\alpha, ~~ Z_{-1}\alpha= \alpha.$$ Then we define $$\mathcal{T}_{0}(\mathcal{F})= (T_{0}\mathcal{F}_{1}, T_{0}\alpha, T_{0}\mathcal{F}_{2}) ~~{\rm and}~~ \mathcal{Z}_{-1}\mathcal{F}=(Z_{-1}\mathcal{F}_{1}, Z_{-1}\alpha, Z_{-1}\mathcal{F}_{2})= \mathcal{F},$$ where $T_{0}\mathcal{F}_{i}= \pi_{S*}\mathcal{F}_{i}$ for $i=1, 2.$  Clearly, $\mathcal{T}_{0}(\mathcal{F})\in {\bf{Mod}}(f).$  Let $\mathcal{Z}_{0}\mathcal{F}= (Z_{0}\mathcal{F}_{1}, Z_{0}\alpha, Z_{0}\mathcal{F}_{2}),$ where $Z_{0}\alpha= \pi_{X}^{*}T_{0}\alpha.$ Since $\mathbb{P}(f)$ is a flat map, $\mathcal{Z}_{0}\mathcal{F}\in {\bf{Mod}}(\mathbb{P}(f)).$  Inductively, we define

$$\mathcal{T}_{n}(\mathcal{F})= (T_{n}\mathcal{F}_{1}, T_{n}\alpha, T_{n}\mathcal{F}_{2}) ~~{\rm and}~~ \mathcal{Z}_{n}\mathcal{F}=(Z_{n}\mathcal{F}_{1}, Z_{n}\alpha, Z_{n}\mathcal{F}_{2}),$$ where $T_{n}\alpha= \pi_{X*}((Z_{n-1}\alpha)(n))$ and $Z_{n}\alpha=(\pi_{X}^{*}T_{n}\alpha)(-n).$ One can easily check that for each $n \in \mathbb{N},$ $\mathcal{T}_{n}(\mathcal{F}) \in {\bf{Mod}}(f)$ and $\mathcal{Z}_{n}\mathcal{F} \in {\bf{Mod}}(\mathbb{P}(f)).$ Define $$\pi_{S}^{*}\mathcal{T}_{n}(\mathcal{F}):= (\pi_{S}^{*}T_{n}\mathcal{F}_{1}, \pi_{X}^{*}T_{n}\alpha, \pi_{S}^{*}T_{n}\mathcal{F}_{2}).$$ Thus we have sequences 

\begin{equation}\label{relative Resolution seq}
 0 \to \mathcal{Z}_{n}\mathcal{F}(n) \to \pi_{S}^{*}\mathcal{T}_{n}\mathcal{F} \stackrel{(\varepsilon_{1}, \varepsilon_{2})}\longrightarrow \mathcal{Z}_{n-1}\mathcal{F}(n)
\end{equation} in ${\bf{Mod}}(\mathbb{P}(f)).$ We are now ready to prove a relative version of Theorem \ref{QRT}.

\begin{theorem}\label{relative Q Resolution}
 Let $\mathcal{F}:= (\mathcal{F}_{1}, \alpha, \mathcal{F}_{2}) \in {\bf{MR}}(\mathbb{P}(f)),$ where  $\mathbb{P}(f)$ as in diagram (\ref{basicdiagram1}) with rank$(\mathcal{E})= r+1.$ Then there is an exact sequence
 \tiny
 \begin{equation}\label{relative exact Q Resolution}
  0 \to (\pi_{S}^{*}\mathcal{T}_{r}\mathcal{F})(-r) \stackrel{(\varepsilon_{1}(-r),\varepsilon_{2}(-r))} \longrightarrow \dots \to (\pi_{S}^{*}\mathcal{T}_{i}\mathcal{F})(-i)\stackrel{(\varepsilon_{1}(-i),\varepsilon_{2}(-i))}\longrightarrow \dots \stackrel{(\varepsilon_{1}(-1),\varepsilon_{2}(-1))}\longrightarrow \pi_{S}^{*}\mathcal{T}_{0}\mathcal{F} \to \mathcal{F}\to 0
 \end{equation} 
 \tiny
 \normalsize
 in ${\bf{Mod}}(\mathbb{P}(f)).$
 Moreover, each $\mathcal{F}\mapsto \mathcal{T}_{i}\mathcal{F}$ is an exact functor from ${\bf{MR}}(\mathbb{P}(f))$ to ${\bf{VB}}(f).$
\end{theorem}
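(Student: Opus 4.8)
The plan is to reduce the relative statement to the absolute Quillen Resolution Theorem (Theorem \ref{QRT}) applied separately to the two components $\mathcal{F}_1$ and $\mathcal{F}_2$, and then to check that the isomorphism data $\alpha$ propagates coherently through the whole resolution. First I would observe that, by construction, applying the forgetful functors $(\mathcal{G}_1,\beta,\mathcal{G}_2)\mapsto \mathcal{G}_1$ and $(\mathcal{G}_1,\beta,\mathcal{G}_2)\mapsto \mathcal{G}_2$ to the sequence (\ref{relative exact Q Resolution}) yields exactly the two absolute resolutions of $\mathcal{F}_1$ and $\mathcal{F}_2$ from Theorem \ref{QRT}: indeed $\mathcal{T}_i\mathcal{F}$ has components $T_i\mathcal{F}_1$ and $T_i\mathcal{F}_2$, the twist $(-i)^{rel}$ has components the ordinary twists $(-i)$, and the maps $(\varepsilon_1(-i),\varepsilon_2(-i))$ have components $\varepsilon(-i)$. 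By Theorem \ref{QRT} both $\mathcal{F}_1$ and $\mathcal{F}_2$ are Mumford-regular (this is part of the hypothesis $\mathcal{F}\in{\bf MR}(\mathbb{P}(f))$), so each component sequence is exact, $Z_r\mathcal{F}_i=0$, and each $T_i\mathcal{F}_i$ is a vector bundle on $S$ by Lemma \ref{basic2}(2) together with Lemma \ref{basic3}. Then Lemma \ref{exact seq} immediately upgrades componentwise exactness to exactness of (\ref{relative exact Q Resolution}) in ${\bf{Mod}}(\mathbb{P}(f))$.

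Next I would verify that the triples appearing in (\ref{relative exact Q Resolution}) genuinely lie in ${\bf{Mod}}(\mathbb{P}(f))$, i.e.\ that the third entries of each $\pi_S^*\mathcal{T}_i\mathcal{F}$ are honest isomorphisms of pullbacks along $\mathbb{P}(f)$. This is where flatness of $f$ is used repeatedly: the key point is the chain of identifications
$$\mathbb{P}(f)^*\pi_S^*\pi_{S*}\mathcal{F}_i \;=\; \pi_X^*f^*\pi_{S*}\mathcal{F}_i \;\cong\; \pi_X^*\pi_{X*}\mathbb{P}(f)^*\mathcal{F}_i$$
already established before Lemma \ref{surjection} via the flat base change theorem, which is what makes $\pi_X^*\pi_{X*}\alpha$, and inductively $\pi_X^*T_n\alpha$, well-defined isomorphisms. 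Since $\pi_S$ is quasi-compact and separated, $\pi_S^*\pi_{S*}Z_{n-1}\mathcal{F}_i$ is Mumford-regular (Example II.8.7.2 of \cite{wei 1}), and the kernels $Z_n\mathcal{F}_i$ are Mumford-regular by Theorem \ref{QRT}; hence each $\mathcal{Z}_n\mathcal{F}\in{\bf{MR}}(\mathbb{P}(f))$ and each $\mathcal{T}_n\mathcal{F}\in{\bf{VB}}(f)$ by Lemma \ref{exact subcategory}. The commutativity of the squares needed to see that $(\varepsilon_1,\varepsilon_2)$ and the twisted maps are morphisms in ${\bf{Mod}}(\mathbb{P}(f))$ is exactly the naturality of $\varepsilon\colon\pi^*\pi_*\to\mathrm{id}$, as recorded in diagram (\ref{basicdiagram}); I would just invoke it inductively with $\mathcal{F}$ replaced by $\mathcal{Z}_{n-1}\mathcal{F}(n)$.

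For the final sentence, that $\mathcal{F}\mapsto\mathcal{T}_i\mathcal{F}$ is an exact functor ${\bf{MR}}(\mathbb{P}(f))\to{\bf{VB}}(f)$, I would argue as follows. On components, $\mathcal{F}_i\mapsto T_n\mathcal{F}_i$ is exact from ${\bf{MR}}(\mathbb{P}(\mathcal{E}))$ to ${\bf{VB}}(S)$ — this is the content of the absolute statement (Theorem 8.7.8 of \cite{wei 1}), and follows because $\pi_{S*}$ is exact on Mumford-regular modules (Lemma \ref{basic3}), the twist $(n)$ is exact, and $Z_{n-1}$ preserves short exact sequences of Mumford-regular modules. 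Given a short exact sequence in ${\bf{MR}}(\mathbb{P}(f))$, Lemma \ref{exact seq} says it is componentwise short exact; applying the exact component functors $T_n(-)$ and the fact that a sequence of triples is exact iff both component sequences are (Lemma \ref{exact seq} again) gives exactness of $\mathcal{T}_n(-)$; functoriality in the morphism data $\alpha$ is automatic from the formula $T_n\alpha=\pi_{X*}((Z_{n-1}\alpha)(n))$.

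The main obstacle I anticipate is bookkeeping rather than conceptual: one must check that the inductively defined isomorphisms $Z_n\alpha$ and $T_n\alpha$ are compatible with the inductively defined Mumford-regular structures, so that at every stage the relevant triple is a legitimate object of ${\bf{MR}}(\mathbb{P}(f))$ or ${\bf{VB}}(f)$ and the maps are legitimate morphisms. Making the induction precise — in particular verifying that $\ker(\varepsilon_1,\varepsilon_2)$ computed in ${\bf{Mod}}(\mathbb{P}(f))$ really has third component $\widetilde{\pi_X^*T_n\alpha}$ matching the kernel of the component maps, which uses flatness of $\mathbb{P}(f)$ exactly as in the discussion preceding Lemma \ref{exact seq} — is the delicate part, but each individual step is a routine application of flat base change and the naturality of $\varepsilon$.
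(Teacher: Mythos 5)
Your proposal is correct and follows essentially the same route as the paper: reduce to the absolute Quillen Resolution Theorem on each component and use Lemma \ref{exact seq} to pass between componentwise exactness and exactness of triples, with the surjectivity/naturality of $\varepsilon$ (the content of Lemma \ref{surjection} and diagram (\ref{basicdiagram})) supplying the morphism structure; the ``moreover'' part is likewise handled in both cases by the exactness of the absolute $T_i$ plus Lemma \ref{exact seq}. The extra care you take over the well-definedness of the triples $\mathcal{T}_n\mathcal{F}$ and $\mathcal{Z}_n\mathcal{F}$ is material the paper places in the setup preceding the theorem rather than in the proof itself.
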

\begin{proof}
 Since $\mathcal{F}_{1}, \mathcal{F}_{2}$ both are Mumford-regular, so are $Z_{n-1}\mathcal{F}_{1}(n), Z_{n-1}\mathcal{F}_{2}(n)$ (see the proof of Theorem II. 8.7.8 of \cite{wei 1}). 
 By Lemma \ref{surjection}, the sequences (\ref{relative Resolution seq}) are exact at $\mathcal{Z}_{n-1}\mathcal{F}(n),$ i.e., we have exact sequences
 \begin{equation}\label{relative exact Q}
   0 \to \mathcal{Z}_{n}\mathcal{F}(n) \to \pi_{S}^{*}\mathcal{T}_{n}\mathcal{F} \stackrel{(\varepsilon_{1}, \varepsilon_{2})}\longrightarrow \mathcal{Z}_{n-1}\mathcal{F}(n)\to 0.
 \end{equation}
Now the twists of the sequences (\ref{relative exact Q}) fit together into the sequence of the form (\ref{relative exact Q Resolution}).

For the second part, each $\mathcal{F}\mapsto T_{i}\mathcal{F}$ is an exact functor from ${\bf {MR}(\mathbb{P}(\mathcal{E}))}$ to ${\bf{VB}}(X)$ by Corollary II.8.7.9 of \cite{wei 1}. Hence the assertion follows from Lemma \ref{exact seq}. \end{proof}

\section{Projective bundle formula for relative $K_{0}^{He}$}\label{first main theorem}
In this section, we prove that the projective bundle formula holds for Heller's relative $K_{0}^{He}$ of a flat map. Throughout this section, $f: X \to S$ is a flat map of quasi-projective schemes. Also, $\mathbb{P}(f)$ always mean the map $\mathbb{P}(f^{*}\mathcal{E}) \to \mathbb{P}(\mathcal{E})$  as in  diagram (\ref{basicdiagram1}) with rank$(\mathcal{E})= r+1.$

   We observe in Lemma \ref{exact subcategory} that ${\bf{VB}}(f)$ is an exact subcategory of ${\bf{Mod}}(f).$ So we can define $K_{0}({\bf{VB}}(f))$ in the sense of Quillen absolute $K_{0}$ of exact categories. By definition, $K_{0}({\bf{VB}}(f))$ is the abelian group generated $[(V_{1}, \alpha, V_{2})],$ where $(V_{1}, \alpha, V_{2}) \in {\bf{VB}}(f),$ and  relations 
 $[V^{'}] + [V^{''}] = [V]$ for every exact sequence $0 \to V^{'} \to V \to V^{''}\to 0$ in  ${\bf{VB}}(f).$ It is denoted by $K_{0}^{Q}(f).$ Clearly, there is a natural surjection 
 \begin{equation}
  \eta^{f}: K_{0}^{Q}(f) \to K_{0}^{He}(f)
 \end{equation}
 and $\ker (\eta^{f})$ is generated by $[(V_{1},\alpha,V_{2})] + [(V_{2},\beta,V_{3})] - [(V_{1},\beta\alpha,V_{3})]$ for every pair $(V_{1},\alpha,V_{2}), (V_{2},\beta,V_{3})$ of objects in ${\bf{VB}}(f).$

The $n$-th twist of ${\bf{MR}}(\mathbb{P}(f)),$ notation ${\bf{MR}}(\mathbb{P}(f))(n),$ is a category consisting of objects $(\mathcal{F}_{1}, \alpha, \mathcal{F}_{2})$ of ${\bf{VB}}(\mathbb{P}(f))$ such that $(\mathcal{F}_{1}(-n), \alpha(-n), \mathcal{F}_{2}(-n))$ is in ${\bf{MR}}(\mathbb{P}(f)).$ Each ${\bf{MR}}(\mathbb{P}(f))(n)$ is an exact category because the relative twisting is an exact functor (see Lemma \ref{shift exact}) and the Mumford-regular modules are closed under extensions (see Lemma II.8.7.4 of \cite{wei 1}). By Lemma \ref{besic1}(1), we have
\small
\begin{equation*}
 {\bf{MR}}(\mathbb{P}(f))={\bf{MR}}(\mathbb{P}(f))(0)\subseteq {\bf{MR}}(\mathbb{P}(f))(-1)\subseteq \dots \subseteq {\bf{MR}}(\mathbb{P}(f))(n)\subseteq {\bf{MR}}(\mathbb{P}(f))(n-1)\subseteq \dots 
\end{equation*}
\small
\begin{theorem}\label{MR=VB}
 For all $n\leq 0,$ $K_{0}^{Q}{\bf{MR}}(\mathbb{P}(f))\cong K_{0}^{Q}{\bf{MR}}(\mathbb{P}(f))(n)\cong K_{0}^{Q}(\mathbb{P}(f))$ induced by the inclusion ${\bf{MR}}(\mathbb{P}(f))(n)\subset {\bf{VB}}(\mathbb{P}(f)).$
\end{theorem}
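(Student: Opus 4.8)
The plan is to prove the two isomorphisms in Theorem \ref{MR=VB} by imitating the proof of the absolute statement (Theorem II.8.7.6 of \cite{wei 1}), replacing the resolution theorem of \cite{DQ} with its relative counterpart Theorem \ref{relative Q Resolution}. The first isomorphism $K_{0}^{Q}{\bf{MR}}(\mathbb{P}(f))\cong K_{0}^{Q}{\bf{MR}}(\mathbb{P}(f))(n)$ for $n\leq 0$ should follow formally from the filtration displayed just before the statement: the relative twist $(1)^{rel}$ is an exact autoequivalence of ${\bf{VB}}(\mathbb{P}(f))$ (Lemma \ref{shift exact}), and it carries ${\bf{MR}}(\mathbb{P}(f))(n)$ isomorphically onto ${\bf{MR}}(\mathbb{P}(f))(n-1)$, so on $K_0$ all the inclusions in the chain
\begin{equation*}
 {\bf{MR}}(\mathbb{P}(f))={\bf{MR}}(\mathbb{P}(f))(0)\subseteq {\bf{MR}}(\mathbb{P}(f))(-1)\subseteq \dots
\end{equation*}
induce compatible maps, and twisting exhibits each inclusion's induced map as (conjugate to) the previous one; a standard colimit/cofinality argument then identifies all the $K_0$ groups. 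I would phrase this via the observation that $K_0$ of the union (the full subcategory of ${\bf{VB}}(\mathbb{P}(f))$ consisting of bundles that become Mumford-regular after some twist) is the colimit of the $K_0^Q{\bf{MR}}(\mathbb{P}(f))(n)$, and that colimit is attained at every finite stage since the transition maps are isomorphisms.

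For the second isomorphism $K_{0}^{Q}{\bf{MR}}(\mathbb{P}(f))(n)\cong K_{0}^{Q}(\mathbb{P}(f))$, the natural candidate inverse to the inclusion-induced map is given by sending an object $\mathcal{F}=(\mathcal{F}_1,\alpha,\mathcal{F}_2)\in{\bf{VB}}(\mathbb{P}(f))$ to a suitable twist $\mathcal{F}(m)$ with $m\gg 0$, which lies in ${\bf{MR}}(\mathbb{P}(f))$ by Lemma \ref{basic2}(1) applied to $\mathcal{F}_1$ and $\mathcal{F}_2$ separately (choosing $m$ large enough to work for both). To see this is well-defined on $K_0$ one must check it is independent of $m$ (any two large twists differ by an exact autoequivalence, or one relates $\mathcal{F}(m)$ and $\mathcal{F}(m+1)$ through the Koszul-type exact sequence on ${\bf{MR}}$) and additive on short exact sequences (immediate from exactness of the twist, Lemma \ref{shift exact} together with Lemma \ref{exact seq}). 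That this map and the inclusion map are mutually inverse on $K_0^Q$ is then a short computation: composed one way it is the identity up to a twist, composed the other way one uses that the relative Quillen resolution Theorem \ref{relative Q Resolution} expresses $[\mathcal{F}]$ in $K_0^Q{\bf{MR}}(\mathbb{P}(f))(n)$ in terms of the $[(\pi_S^*\mathcal{T}_i\mathcal{F})(-i)]$, i.e.\ in terms of pulled-back bundles, showing the inclusion map is surjective while the explicit twist map shows injectivity.

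Concretely I would organize the argument as: (1) record that $(n)^{rel}$ is an exact autoequivalence of ${\bf{VB}}(\mathbb{P}(f))$ restricting to exact equivalences ${\bf{MR}}(\mathbb{P}(f))(k)\xrightarrow{\sim}{\bf{MR}}(\mathbb{P}(f))(k-n)$; (2) deduce the first isomorphism by the colimit/cofinality argument above; (3) define the twist-into-${\bf{MR}}$ map $K_0^Q(\mathbb{P}(f))\to K_0^Q{\bf{MR}}(\mathbb{P}(f))$, checking well-definedness using Lemma \ref{basic2}(1), Lemma \ref{shift exact}, and the relative Koszul/resolution sequences; (4) verify it is a two-sided inverse of the map induced by inclusion, invoking Theorem \ref{relative Q Resolution} for one composite. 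The main obstacle I anticipate is step (3)–(4): one has to be careful that the relative Quillen resolution (\ref{relative exact Q Resolution}) is a finite resolution of $\mathcal{F}$ by objects of the form $(\pi_S^*\mathcal{T}_i\mathcal{F})(-i)$, which individually need \emph{not} lie in ${\bf{MR}}(\mathbb{P}(f))$ (only in some twist thereof), so the ``resolution'' argument identifying $[\mathcal{F}]$ in $K_0$ must be carried out inside the larger category of eventually-Mumford-regular bundles, and one then transports back using the already-established first isomorphism. Keeping track of which twist each term lives in, and confirming that $\mathcal{T}_i\mathcal{F}\in{\bf{VB}}(f)$ so that $\pi_S^*\mathcal{T}_i\mathcal{F}$ genuinely comes from ${\bf{VB}}(f)$ (the second assertion of Theorem \ref{relative Q Resolution}), is the delicate bookkeeping in the proof; everything else is formal manipulation of exact categories.
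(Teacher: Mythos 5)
There is a genuine gap at the heart of your argument: you never actually prove that the inclusion-induced transition maps $l_{n}:K_{0}^{Q}{\bf{MR}}(\mathbb{P}(f))(n)\to K_{0}^{Q}{\bf{MR}}(\mathbb{P}(f))(n-1)$ are isomorphisms. Observing that the twist is an exact autoequivalence carrying ${\bf{MR}}(\mathbb{P}(f))(n)$ onto ${\bf{MR}}(\mathbb{P}(f))(n\pm1)$ only shows that all the $l_{n}$ are conjugate to one another; conjugate maps need not be isomorphisms, and a directed system whose transition maps are all ``the same'' non-isomorphism (e.g.\ multiplication by $2$ on $\mathbb{Z}$) is not attained at any finite stage. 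So your ``standard colimit/cofinality argument'' is circular: the stabilization of the colimit is exactly what has to be proved. The ingredient you are missing is the (relative) Koszul resolution
$0\to\mathcal{F}\to\mathcal{F}(1)\otimes\pi_{S}^{*}\wedge\mathcal{E}^{\vee}\to\dots\to\mathcal{F}(r+1)\otimes\pi_{S}^{*}\wedge^{r+1}\mathcal{E}^{\vee}\to0$,
which, via the additivity theorem, exhibits $\sum_{i>0}(-1)^{i-1}\lambda_{i}$ (with $\lambda_{i}$ induced by $\mathcal{F}\mapsto\mathcal{F}(i)\otimes\pi_{S}^{*}\wedge^{i}\mathcal{E}$, an exact functor from ${\bf{MR}}(\mathbb{P}(f))(n-1)$ to ${\bf{MR}}(\mathbb{P}(f))(n)$) as an explicit two-sided inverse of $l_{n}$. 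That is the paper's entire proof; the second isomorphism then falls out of $K_{0}^{Q}{\bf{VB}}(\mathbb{P}(f))=\varinjlim_{n}K_{0}^{Q}{\bf{MR}}(\mathbb{P}(f))(n)$ with all transition maps now known to be isomorphisms, and no separate inverse to the inclusion into ${\bf{VB}}(\mathbb{P}(f))$ needs to be constructed.

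Your proposed inverse for the second isomorphism, $[\mathcal{F}]\mapsto[\mathcal{F}(m)]$ for $m\gg0$, does not work: this class genuinely depends on $m$ and is not inverse to the inclusion. Already in the absolute case $X=S=\operatorname{Spec}(k)$, $\mathcal{E}=\mathcal{O}^{r+1}$, one has $[\mathcal{O}]\neq[\mathcal{O}(m)]$ in $K_{0}(\mathbb{P}^{r})$, so ``the identity up to a twist'' is not the identity, and no choice of $m$ repairs this. Relatedly, Theorem \ref{relative Q Resolution} is not the right tool for this theorem (it enters only later, in the proof of Theorem \ref{Proj formula for bass k}); the terms $(\pi_{S}^{*}\mathcal{T}_{i}\mathcal{F})(-i)$ of that resolution do not lie in ${\bf{MR}}(\mathbb{P}(f))$ and in any case that resolution only applies to $\mathcal{F}$ already Mumford-regular, so it cannot establish surjectivity of $K_{0}^{Q}{\bf{MR}}(\mathbb{P}(f))\to K_{0}^{Q}(\mathbb{P}(f))$. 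Replace steps (3)--(4) of your outline by the Koszul-resolution/additivity argument above and the proof goes through.
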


\begin{proof}
 Let $(\mathcal{F}_{1}, \alpha, \mathcal{F}_{2})\in {\bf{VB}}(\mathbb{P}(f)).$ By Lemma \ref{basic2}(1), $\mathcal{F}_{1}(n),$ $\mathcal{F}_{2}(n) \in {\bf {MR}(\mathbb{P}(\mathcal{E}))}$ for $n\geq 0$ large enough. Then $$(\mathcal{F}_{1}(n), \alpha(n), \mathcal{F}_{2}(n))\in {\bf{MR}}(\mathbb{P}(f))(-n)$$ for $n\geq 0$ large enough. So, it is clear that $\cup_{n\leq  0} {\bf{MR}}(\mathbb{P}(f))(n)= {\bf{VB}}(\mathbb{P}(f)).$ Since $K_{0}^{Q}$ commutes with filtered colimits (see Example II. 7.1.7 of \cite{wei 1}), we have $K_{0}^{Q}{\bf{VB}}(\mathbb{P}(f))= \varinjlim_{n} K_{0}^{Q}{\bf{MR}}(\mathbb{P}(f))(n).$ For each inclusion   $${\bf{MR}}(\mathbb{P}(f))(n)\subseteq {\bf{MR}}(\mathbb{P}(f))(n-1),$$ we have the induced map $l_{n}:K_{0}^{Q}{\bf{MR}}(\mathbb{P}(f))(n) \to  K_{0}^{Q}{\bf{MR}}(\mathbb{P}(f))(n-1).$ So, it is enough to show that each such $l_{n}$ is an isomorphism. For each $i>0,$ $$(\mathcal{F}_{1}, \alpha, \mathcal{F}_{2})\mapsto (\mathcal{F}_{1}(i)\otimes \pi_{S}^{*}\wedge^{i}\mathcal{E}, \alpha(i)\otimes id,  \mathcal{F}_{2}(i)\otimes \pi_{S}^{*}\wedge^{i}\mathcal{E})$$ defines an exact functor from ${\bf{MR}}(\mathbb{P}(f))(n-1)$ to ${\bf{MR}}(\mathbb{P}(f))(n).$ It induces a homomorphism $\lambda_{i}: K_{0}^{Q}{\bf{MR}}(\mathbb{P}(f))(n-1) \to K_{0}^{Q}{\bf{MR}}(\mathbb{P}(f))(n).$ For a vector bundle $\mathcal{F}$ in ${\bf{VB}}(\mathbb{P}(\mathcal{E})),$ we have the Koszul resolution (see the proof of Lemma 1.3 in the section 8 of \cite{DQ})
 $$ 0 \to \mathcal{F}\to \mathcal{F}(1)\otimes \pi^{*}\wedge \mathcal{E}^{\vee} \to \dots \to \mathcal{F}(r+1) \otimes \pi^{*}\wedge^{r+1}\mathcal{E}^{\vee} \to 0.$$ Here $\mathcal{E}^{\vee}$ denotes the dual of $\mathcal{E}.$ Similarly, a relative version of Koszul resolution is (by Lemma \ref{exact seq})
 \begin{multline*}
  0 \to (\mathcal{F}_{1}, \alpha, \mathcal{F}_{2}) \to (\mathcal{F}_{1}(1)\otimes \pi_{S}^{*}\wedge \mathcal{E}^{\vee}, \alpha(1)\otimes id,  \mathcal{F}_{2}(1)\otimes \pi_{S}^{*}\wedge \mathcal{E}^{\vee}) \\ \to \dots \to (\mathcal{F}_{1}(r+1)\otimes \pi_{S}^{*}\wedge^{r+1}\mathcal{E}^{\vee}, \alpha(r+1)\otimes id,  \mathcal{F}_{2}(r+1)\otimes \pi_{S}^{*}\wedge^{r+1}\mathcal{E}^{\vee}) \to 0.
 \end{multline*}
By the additivity theorem (see Theorem 2, Collolary 3 of \cite{DQ}), the map $\sum_{i>0} (-1)^{i-1} \lambda_{i}$ is an inverse to the map $l_{n}.$ Hence the assertion.
 \end{proof}

Let $(\mathcal{F}_{1}, \alpha, \mathcal{F}_{2})\in {\bf{VB}}(f).$ Then the assignment $$u_{i}: (\mathcal{F}_{1}, \alpha, \mathcal{F}_{2})\mapsto (\pi_{S}^{*}\mathcal{F}_{1}, \pi_{X}^{*}\alpha, \pi_{S}^{*}\mathcal{F}_{2})(-i)$$ defines an exact functor from ${\bf{VB}}(f)$ to ${\bf{VB}}(\mathbb{P}(f)).$ Let $u_{i*}$ denote the induced map  $ K_{0}^{Q}(f) \to K_{0}^{Q}(\mathbb{P}(f)).$  

For notational convenience, we prefer to write $\mathcal{F}_{k}$ (resp. $\pi_{S}^{*}\mathcal{F}_{k}$) instead of $(\mathcal{F}_{k1}, \alpha_{k}, \mathcal{F}_{k2})$(resp. $(\pi_{S}^{*}\mathcal{F}_{k1}, \pi_{X}^{*}\alpha_{k}, \pi_{S}^{*}\mathcal{F}_{k2})$). We can now define a group homomorphism
$$u^{Q}: K_{0}^{Q}(f)^{r+1} \to K_{0}^{Q}(\mathbb{P}(f))$$
by sending $([\mathcal{F}_{k}])_{k=0, 1, \dots, r}$ to $\sum_{k=0}^{r} u_{k*}[\mathcal{F}_{k}]=\sum_{k=0}^{r} [u_{k}\mathcal{F}_{k}]= \sum_{k=0}^{r}[\pi_{S}^{*}\mathcal{F}_{k}(-k)].$

Note that each $u_{i}$ also induces a map $K_{0}^{He}(f) \to K_{0}^{He}(\mathbb{P}(f)).$ Therefore, in a similar way we can define a group homomorphism 
$$u^{He}: K_{0}^{He}(f)^{r+1} \to K_{0}^{He}(\mathbb{P}(f)).$$
\begin{theorem}\label{Proj formula for bass k}
 The map $u^{Q}: K_{0}^{Q}(f)^{r+1} \to K_{0}^{Q}(\mathbb{P}(f))$ is an isomorphism.
\end{theorem}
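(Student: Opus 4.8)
The plan is to adapt the classical proof of the projective bundle theorem for $K_0$ of exact categories (as in \cite[Chapter II.8]{wei 1}) to the relative setting, using the relative Quillen resolution theorem (Theorem \ref{relative Q Resolution}) in place of the absolute one. The strategy is to factor $u^Q$ through $K_0^Q{\bf{MR}}(\mathbb{P}(f))$, where by Theorem \ref{MR=VB} we may freely replace $K_0^Q(\mathbb{P}(f))$ by $K_0^Q{\bf{MR}}(\mathbb{P}(f))$, and then to construct an explicit inverse using the exact functors $\mathcal{T}_i$.

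First I would note that the functors $u_i$ take values (after a suitable shift) in the twisted categories ${\bf{MR}}(\mathbb{P}(f))(n)$: indeed $\pi_S^*\mathcal{F}$ is Mumford-regular for $\mathcal{F}\in{\bf{VB}}(f)$ by the argument already used in Lemma \ref{surjection} (the sheaf $\pi_S^*\mathcal{F}_k$ lies in ${\bf {MR}}(\mathbb{P}(\mathcal{E}))$ by Example II.8.7.2 of \cite{wei 1}, and the flat base change identification makes the triple an object of ${\bf{MR}}(\mathbb{P}(f))$). So $u^Q$ factors as $K_0^Q(f)^{r+1}\to K_0^Q{\bf{MR}}(\mathbb{P}(f))(-r)\xrightarrow{\cong}K_0^Q(\mathbb{P}(f))$, and it suffices to show the first map is an isomorphism. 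Next I would construct a candidate inverse $v^Q: K_0^Q{\bf{MR}}(\mathbb{P}(f))\to K_0^Q(f)^{r+1}$ by sending the class of $\mathcal{F}=(\mathcal{F}_1,\alpha,\mathcal{F}_2)$ to $\bigl([\mathcal{T}_0\mathcal{F}],-[\mathcal{T}_1\mathcal{F}],\dots,(-1)^r[\mathcal{T}_r\mathcal{F}]\bigr)$, or rather the appropriately-indexed variant coming from the resolution (\ref{relative exact Q Resolution}); this is a well-defined homomorphism because each $\mathcal{T}_i$ is an exact functor ${\bf{MR}}(\mathbb{P}(f))\to{\bf{VB}}(f)$ by the last assertion of Theorem \ref{relative Q Resolution}.

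Then I would check the two composites. That $v^Q\circ u^Q=\mathrm{id}$ amounts to computing $\mathcal{T}_i$ applied to $\pi_S^*\mathcal{F}_k(-k)$; by the analogous absolute computation (Lemma II.8.7.6 / the proof of the projective bundle theorem in \cite{wei 1}) one gets $\mathcal{T}_i(\pi_S^*\mathcal{F}_k(-k))$ is $\mathcal{F}_k$ tensored with a sheaf on $S$ whose cohomology sorts out so that the matrix expressing $v^Q u^Q$ in the basis $[\mathcal{F}_k]$ is unipotent triangular, hence invertible — and a change of the chosen basis of $K_0^Q(f)^{r+1}$ (replacing the generators $u_k$ by suitable integer combinations, exactly as in the absolute case) turns it into the identity. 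For $u^Q\circ v^Q=\mathrm{id}$ on $K_0^Q{\bf{MR}}(\mathbb{P}(f))$, the exact sequence (\ref{relative exact Q Resolution}) gives in $K_0^Q(\mathbb{P}(f))$ the relation $[\mathcal{F}]=\sum_{i=0}^r(-1)^i[(\pi_S^*\mathcal{T}_i\mathcal{F})(-i)]=\sum_{i=0}^r(-1)^i u_{i*}[\mathcal{T}_i\mathcal{F}]=u^Q(v^Q[\mathcal{F}])$, using that each term $(\pi_S^*\mathcal{T}_i\mathcal{F})(-i)$ lies in some ${\bf{MR}}(\mathbb{P}(f))(n)$ with $n\le 0$ so that the identification of Theorem \ref{MR=VB} applies.

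The main obstacle I expect is bookkeeping with the Mumford-regularity hypotheses and the twists: one must be careful that all the triples $\mathcal{T}_i\mathcal{F}$, $\pi_S^*\mathcal{T}_i\mathcal{F}$, and their shifts genuinely land in the exact subcategories where the various $K_0^Q$ groups are computed, and that the identifications of Theorem \ref{MR=VB} are compatible across the different levels $n$ so that the relation read off from (\ref{relative exact Q Resolution}) is an honest identity in the single group $K_0^Q(\mathbb{P}(f))$. The flatness of $f$ is used throughout to guarantee that kernels, cokernels, and the base-change isomorphisms behave well in ${\bf{Mod}}(f)$ (Lemma \ref{exact seq}), so that the resolution and the functors $\mathcal{T}_i$ are exact in the relative sense; once these compatibilities are set up, the argument is a faithful parallel of Quillen's original proof.
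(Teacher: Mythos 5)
Your surjectivity argument is exactly the paper's: factor through $K_0^Q{\bf{MR}}(\mathbb{P}(f))$ via Theorem \ref{MR=VB}, define $\varphi([\mathcal{F}])=([\mathcal{T}_0\mathcal{F}],-[\mathcal{T}_1\mathcal{F}],\dots,(-1)^r[\mathcal{T}_r\mathcal{F}])$, and use the relative resolution (\ref{relative exact Q Resolution}) together with additivity to see that $u^Q\varphi=\mathrm{id}$ on $K_0^Q(\mathbb{P}(f))$. Where you diverge is injectivity. You propose to use the \emph{same} $\mathcal{T}_i$-based map as a two-sided inverse and to compute the matrix of $\varphi\circ u^Q$ by evaluating $\mathcal{T}_i$ on $\pi_S^*\mathcal{F}_k(-k)$. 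The paper instead introduces a second, different family of exact functors $v_i(\mathcal{F})=\pi_{S*}(\mathcal{F}(i))$ and computes $v_{i*}u_{j*}$ directly from the projection-formula identities $\pi_*\pi^*\mathcal{N}(n)=0$ for $-r\le n<0$, $=\mathcal{N}$ for $n=0$, and $={\rm Sym}_n\mathcal{E}\otimes\mathcal{N}$ for $n>0$; this immediately yields a lower-triangular matrix with identity diagonal, hence $v^Qu^Q$ invertible and $u^Q$ injective. The paper's choice buys a one-line computation; yours requires genuinely more work, as follows.

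The weak point in your injectivity step is that $\mathcal{T}_i$ is defined (inductively, via Mumford-regularity) only on ${\bf{MR}}(\mathbb{P}(f))$, whereas $u_k\mathcal{F}=\pi_S^*\mathcal{F}(-k)$ lies in ${\bf{MR}}(\mathbb{P}(f))(-k)$ and is \emph{not} Mumford-regular for $k>0$. So the expression $\mathcal{T}_i(\pi_S^*\mathcal{F}_k(-k))$ does not literally make sense; to evaluate your composite you must first push $[\pi_S^*\mathcal{F}_k(-k)]$ back into $K_0^Q{\bf{MR}}(\mathbb{P}(f))$ through the isomorphism of Theorem \ref{MR=VB}, whose inverse is the alternating sum of the Koszul-twist functors $\lambda_i$, and only then apply the $\mathcal{T}_i$. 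The resulting ``matrix entries'' are therefore alternating sums over the Koszul resolution, and the claimed unipotent triangularity is a nontrivial computation that you assert only by analogy with the absolute case. This can be carried out (it is essentially Quillen's original bookkeeping), so your route is salvageable, but as written it is the one step that is not justified; the paper's introduction of the $v_i$ is precisely the device that makes this issue disappear.
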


\begin{proof}
 By Theorem \ref{relative Q Resolution}, each $\mathcal{T}_{n}$ is an exact functor from ${\bf{MR}}(\mathbb{P}(f))$ to ${\bf{VB}}(f).$ Hence we can define a group homomorphism
 $$\varphi: K_{0}^{Q}{\bf{MR}}(\mathbb{P}(f)) \to K_{0}^{Q}(f)^{r+1}, [\mathcal{F}] \mapsto ([\mathcal{T}_{0}\mathcal{F}], -[\mathcal{T}_{1}\mathcal{F}], \dots, (-1)^{r}[\mathcal{T}_{r}\mathcal{F}]),$$ where $\mathcal{F}$ denotes the triples $(\mathcal{F}_{1}, \alpha, \mathcal{F}_{2}).$ Then the composition map
 $$u^{Q}\varphi:  K_{0}^{Q}(\mathbb{P}(f))\stackrel{\cong}\leftarrow K_{0}^{Q}{\bf{MR}}(\mathbb{P}(f)) \stackrel{\varphi}\to K_{0}^{Q}(f)^{r+1} \stackrel{u^{Q}}\to K_{0}^{Q}(\mathbb{P}(f))$$ sends $[\mathcal{F}]$ to $\sum_{k=0}^{r}(-1)^{k}[(\pi^{*}\mathcal{T}_{k}\mathcal{F})(-k)],$ which is equal to $[\mathcal{F}]$ by Theorem \ref{relative Q Resolution} and the additivity theorem (see Theorem 2, Collolary 3 of \cite{DQ}). This shows that $u^{Q}$ is onto.
 
 The assignment
 $$v_{i}: \mathcal{F}:=(\mathcal{F}_{1}, \alpha, \mathcal{F}_{2}) \mapsto \pi_{S*}(\mathcal{F}(i)):=(\pi_{S*}(\mathcal{F}_{1}(i)), \pi_{X*}(\alpha (i)), \pi_{S*}(\mathcal{F}_{2}(i)))$$ is also an exact functor from ${\bf{MR}}(\mathbb{P}(f))$ to ${\bf{VB}}(f)$ by Lemmas \ref{basic3} and \ref{basic2}. Let $v_{i*}$ denote the induced map $$ K_{0}^{Q}{\bf{MR}}(\mathbb{P}(f)) \to K_{0}^{Q}(f), [\mathcal{F}] \mapsto [v_{i}\mathcal{F}].$$ Using these $v_{i}$'s, we can define a group homomorphism
 $$v^{Q}: K_{0}^{Q}{\bf{MR}}(\mathbb{P}(f)) \to K_{0}^{Q}(f)^{r+1}, [\mathcal{F}] \mapsto ([v_{0}\mathcal{F}], [v_{1}\mathcal{F}], \dots, [v_{r}\mathcal{F}]).$$  
Then the composition map (using Theorem \ref{MR=VB}) $$v^{Q}u^{Q}:  K_{0}^{Q}(f)^{r+1} \to  K_{0}^{Q}(f)^{r+1}$$ is given by the matrix $(v_{i*}u_{j*}).$ Recall from Example II. 8.7.2 of \cite{wei 1} that for a quasi-coherent $\mathcal{O}_{X}$-module $\mathcal{N},$ we have $\pi_{*}\pi^{*} \mathcal{N}= \mathcal{N},$ $\pi_{*}\pi^{*} \mathcal{N}(n)= 0$ for $n<0$ and $\pi_{*}\pi^{*} \mathcal{N}(n)= {\rm {Sym}}_{n}\mathcal{E} \otimes \mathcal{N}$ for $n>0.$ Thus $$v_{i*}u_{j*}[(\mathcal{F}_{1}, \alpha, \mathcal{F}_{2})]= [(\pi_{S*}((\pi_{S}^{*}\mathcal{F}_{1})(i-j)), \pi_{X*}((\pi_{X}^{*}\alpha)(i-j)), \pi_{S*}((\pi_{S}^{*}\mathcal{F}_{2})(i-j))].$$ Since the diagram 
 
  $$\begin{CD}
     f^{*}\pi_{S*}\pi_{S}^{*}\mathcal{F}_{1}\cong \pi_{X*}\pi_{X}^{*}f^{*}\mathcal{F}_{1} @> = >> f^{*}\mathcal{F}_{1}\\
    @V \pi_{X*}\pi_{X}^{*}\alpha VV  @V \alpha VV \\                 
  f^{*}\pi_{S*}\pi_{S}^{*}\mathcal{F}_{2}\cong \pi_{X*}\pi_{X}^{*}f^{*}\mathcal{F}_{2} @> = >> f^{*}\mathcal{F}_{2}
\end{CD}$$ is commutative, $[(\pi_{S*}\pi_{S}^{*}\mathcal{F}_{1}, \pi_{X*}\pi_{X}^{*}\alpha, \pi_{S*}\pi_{S}^{*}\mathcal{F}_{2})]= [(\mathcal{F}_{1}, \alpha, \mathcal{F}_{2})]$ in $K_{0}^{Q}(f).$
 This implies that $v_{i*}u_{j*}=0$ for $i<j$ and   $v_{i*}u_{j*}= {\rm{id}}$ for $i=j.$ We get that  $ (v_{i*}u_{j*})$ is a lower triangular matrix with all of its diagonal entries equal to ${\rm{id}}.$ Therefore $v^{Q}u^{Q}$ is an isomorphism and hence $u^{Q}$ is one-one. \end{proof}
 
 Next, we prove a similar result for $K_{0}^{He}.$
 
 \begin{theorem}\label{proj formula for k heller}
  The map $u^{He}: K_{0}^{He}(f)^{r+1} \to K_{0}^{He}(\mathbb{P}(f))$ is an isomorphism.
 \end{theorem}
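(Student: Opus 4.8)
The plan is to leverage the already-established isomorphism $u^{Q}\colon K_{0}^{Q}(f)^{r+1}\stackrel{\cong}\to K_{0}^{Q}(\mathbb{P}(f))$ from Theorem \ref{Proj formula for bass k}, together with the surjections $\eta^{f}\colon K_{0}^{Q}(f)\to K_{0}^{He}(f)$ and $\eta^{\mathbb{P}(f)}\colon K_{0}^{Q}(\mathbb{P}(f))\to K_{0}^{He}(\mathbb{P}(f))$ recorded just before Theorem \ref{MR=VB}. The functors $u_{i}$ are compatible with these quotient maps, so there is a commutative square whose top row is $\eta^{f})^{\oplus(r+1)}$ (really the map induced on $(r+1)$-fold products), whose left vertical arrow is $u^{Q}$, whose bottom arrow is $\eta^{\mathbb{P}(f)}$, and whose right vertical arrow is $u^{He}$. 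Since $u^{Q}$ is an isomorphism and $\eta^{\mathbb{P}(f)}$ is surjective, $u^{He}$ is automatically surjective. The whole content is therefore injectivity of $u^{He}$, which by a diagram chase reduces to the claim that $u^{Q}\bigl(\ker(\eta^{f})^{r+1}\bigr)=\ker(\eta^{\mathbb{P}(f)})$; equivalently, the composite $K_{0}^{Q}(f)^{r+1}\stackrel{u^{Q}}\to K_{0}^{Q}(\mathbb{P}(f))\stackrel{\eta^{\mathbb{P}(f)}}\to K_{0}^{He}(\mathbb{P}(f))$ has kernel exactly equal to the preimage of $\ker(\eta^{f})^{r+1}$ under the first map, which since $u^{Q}$ is iso means: $u^{Q}$ carries the subgroup generated by the ``composition relators'' for ${\bf{VB}}(f)$ (in each factor) onto the subgroup generated by the composition relators for ${\bf{VB}}(\mathbb{P}(f))$.

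First I would spell out that $u^{Q}$ sends $\ker(\eta^{f})^{r+1}$ \emph{into} $\ker(\eta^{\mathbb{P}(f)})$: a generator $[(V_{1},\alpha,V_{2})]+[(V_{2},\beta,V_{3})]-[(V_{1},\beta\alpha,V_{3})]$ in the $k$-th factor is carried by $u_{k*}$ to the analogous element built from $\pi_{S}^{*}V_{1}(-k),\pi_{S}^{*}V_{2}(-k),\pi_{S}^{*}V_{3}(-k)$ with the pulled-back isomorphisms, which is visibly a composition relator in ${\bf{VB}}(\mathbb{P}(f))$, hence lies in $\ker(\eta^{\mathbb{P}(f)})$. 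This gives a well-defined surjection $\bar u^{He}\colon K_{0}^{He}(f)^{r+1}\to K_{0}^{He}(\mathbb{P}(f))$ which is nothing but $u^{He}$, and also shows $u^{Q}(\ker(\eta^{f})^{r+1})\subseteq\ker(\eta^{\mathbb{P}(f)})$. For the reverse inclusion — the crux — I would exhibit a map $\varphi^{He}$ (and $v^{He}$) on the Heller groups paralleling $\varphi,v^{Q}$ from the proof of Theorem \ref{Proj formula for bass k}, by checking that $\mathcal{T}_{i}$ and $v_{i}=\pi_{S*}(-(i))$, being exact functors ${\bf{MR}}(\mathbb{P}(f))\to{\bf{VB}}(f)$ that moreover preserve \emph{composition} of the isomorphism-data (since $\pi_{S*}$ and $\pi_{X*}$ are functorial and commute with composition of morphisms), descend to the Heller quotients. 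Concretely: one must verify $\mathcal{T}_{i}$ of a composite triple $(V_{1},\beta\alpha,V_{3})$ equals the composite of $\mathcal{T}_{i}(V_{1},\alpha,V_{2})$ and $\mathcal{T}_{i}(V_{2},\beta,V_{3})$, which follows because $T_{i}(\beta\alpha)=T_{i}(\beta)\,T_{i}(\alpha)$ by functoriality of the $\pi_{*}$'s and of pullback, plus the analogous statement for $K_{0}^{Q}{\bf{MR}}(\mathbb{P}(f))\cong K_{0}^{He}{\bf{MR}}(\mathbb{P}(f))$, i.e. that Theorem \ref{MR=VB} holds verbatim for $K_{0}^{He}$ with the \emph{same} proof (the Koszul resolution and additivity argument only used exact functors compatible with composition, namely $\mathcal{F}\mapsto\mathcal{F}(i)\otimes\pi_{S}^{*}\wedge^{i}\mathcal{E}$, whose isomorphism-component is $\alpha(i)\otimes\mathrm{id}$ and which is plainly multiplicative in $\alpha$). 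Then the identical computation $v^{He}u^{He}=(v_{i*}u_{j*})$ lower-triangular with identity diagonal, carried out now in the $K_{0}^{He}$ groups, forces $u^{He}$ to be injective.

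I expect the main obstacle to be bookkeeping rather than any genuine difficulty: one must re-run essentially the whole argument of Theorems \ref{MR=VB} and \ref{Proj formula for bass k} over the Heller quotient and, at each step where an exact functor was used, additionally check it respects the second Heller relation (multiplicativity in the isomorphism $\alpha$). The one place deserving care is $\varphi^{He}$: we used $\mathcal{T}_{i}$ on ${\bf{MR}}(\mathbb{P}(f))$, but the key relation $\sum_{k}(-1)^{k}[(\pi^{*}\mathcal{T}_{k}\mathcal{F})(-k)]=[\mathcal{F}]$ came from the exact sequence (\ref{relative exact Q Resolution}) via additivity; since that exact sequence is natural in $\mathcal{F}=(\mathcal{F}_{1},\alpha,\mathcal{F}_{2})$ and the construction of each $\mathcal{T}_{k}$ is manifestly multiplicative in $\alpha$ (being built from $\pi_{S*}$, $\pi_{X}^{*}$, twisting, and kernels, all of which preserve composition of morphisms), the same relation holds in $K_{0}^{He}$. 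A slightly cleaner alternative that I would mention is to argue purely formally: the exact categories and all functors in sight ($u_{i},\mathcal{T}_{i},v_{i}$, the Koszul functors) are exact functors that are \emph{strictly} compatible with composition of the isomorphism data, hence by Iwasa's formalism \cite{RI} induce maps on $K_{0}^{He}$ fitting into exactly the same commutative diagrams; feeding the diagrams of Theorems \ref{MR=VB} and \ref{Proj formula for bass k} through the natural transformation $\eta$ then yields the result with no new computation. Either way the conclusion is that $u^{He}$ is an isomorphism, which is Theorem \ref{projective bundle for relative K_0}.
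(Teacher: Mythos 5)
Your proposal is correct and follows essentially the same route as the paper: reduce everything to showing that $u^{Q}$ carries $\ker((\eta^{f})^{r+1})$ onto $\ker(\eta^{\mathbb{P}(f)})$, which the paper does by checking that $\varphi$ (built from the exact, composition-preserving functors $\mathcal{T}_{i}$) sends composition relators in ${\bf{MR}}(\mathbb{P}(f))$ to composition relators in ${\bf{VB}}(f)$ and that $u^{Q}\varphi=\mathrm{id}$ on these kernels, then concluding by the diagram of short exact sequences. Your extra care about descending Theorem \ref{MR=VB} and the functors $v_{i}$ to the Heller quotients is the same verification the paper performs implicitly, so there is no substantive difference.
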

 
 \begin{proof}
  We consider the following commutative diagram
  $$\begin{CD}
   0 @>>> \ker(\eta^{MR}) @>>> K_{0}^{Q}({\bf{MR}}(\mathbb{P}(f)) @ > \eta^{MR} >>  K_{0}^{He}(\mathbb{P}(f)) @>>> 0 \\
   @.  @VVV           @V \cong VV             @V = VV   \\
   0 @>>> \ker(\eta^{\mathbb{P}(f)}) @>>> K_{0}^{Q}(\mathbb{P}(f)) @ >\eta^{\mathbb{P}(f)} >>  K_{0}^{He}(\mathbb{P}(f)) @>>> 0,
  \end{CD}$$
where the middle map is an isomorphism by Theorem \ref{MR=VB}. So we get $\ker(\eta^{MR})\cong \ker(\eta^{\mathbb{P}(f)}).$ Let  $\mathcal{F}_{12}:= (\mathcal{F}_{1}, \alpha, \mathcal{F}_{2}),$ $\mathcal{F}_{23}:= (\mathcal{F}_{2}, \beta, \mathcal{F}_{3})$ and $\mathcal{F}_{13}:= (\mathcal{F}_{1}, \beta\alpha, \mathcal{F}_{3})$ be in ${\bf{MR}}(\mathbb{P}(f)).$ Note that $\varphi([\mathcal{F}_{12}] + [\mathcal{F}_{23}]- [\mathcal{F}_{13}])\in \ker((\eta^{f})^{r+1})$ whenever $ ([\mathcal{F}_{12}] + [\mathcal{F}_{23}]- [\mathcal{F}_{13}]) \in \ker(\eta^{MR})$ because each $\mathcal{T}_{n}$  is an exact functor from ${\bf{MR}}(\mathbb{P}(f))$ to ${\bf{VB}}(f)$ (see Theorem \ref{Proj formula for bass k} for the map $\varphi$). Then the composition map 
$$u^{Q}\varphi:  \ker(\eta^{\mathbb{P}(f)})\stackrel{\cong}\leftarrow \ker(\eta^{MR}) \stackrel{\varphi}\to \ker((\eta^{f})^{r+1}) \stackrel{u^{Q}}\to \ker(\eta^{\mathbb{P}(f)})$$ sends $[\mathcal{F}_{12}] + [\mathcal{F}_{23}]- [\mathcal{F}_{13}]$  to $$\sum_{k=0}^{r}(-1)^{k}[(\pi_{S}^{*}\mathcal{T}_{k}\mathcal{F}_{12})(-k)] + \sum_{k=0}^{r}(-1)^{k}[(\pi_{S}^{*}\mathcal{T}_{k}\mathcal{F}_{23})(-k)] - \sum_{k=0}^{r}(-1)^{k}[(\pi_{S}^{*}\mathcal{T}_{k}\mathcal{F}_{13})(-k)]$$ which is equal to $[\mathcal{F}_{12}] + [\mathcal{F}_{23}]- [\mathcal{F}_{13}]$ by the additivity theorem  (see Theorem 2, Collolary 3 of \cite{DQ}). This shows that $u^{Q}|_{\ker((\eta^{f})^{r+1})}$ is onto. Therefore, we get the desired isomorphism from the following commutative diagram 
$$\begin{CD}
   0 @>>> \ker((\eta^{f})^{r+1}) @>>> K_{0}^{Q}(f)^{r+1} @ > (\eta^{f})^{r+1} >>  K_{0}^{He}(f)^{r+1} @>>> 0 \\
   @.  @VVV           @V u^{Q} VV             @V u^{He} VV   \\
   0 @>>> \ker(\eta^{\mathbb{P}(f)}) @>>> K_{0}^{Q}(\mathbb{P}(f)) @ >\eta^{\mathbb{P}(f)} >>  K_{0}^{He}(\mathbb{P}(f)) @>>> 0,
  \end{CD}$$ where $u^{Q}$ is an isomorphism by Theorem \ref{Proj formula for bass k} and the first vertical arrow is also an  isomorphism by the above observation. 
 \end{proof}
 
 \begin{corollary}\label{He=k}
 Suppose $\mathbb{P}(f),$ $f$ and $\mathcal{E}$ are as in Theorem \ref{proj formula for k heller}.  Further, we assume that $X$ is an affine scheme. Then there is an isomorphism of groups $K_{0}^{He}(\mathbb{P}(f))\cong K_{0}(\mathbb{P}(f)).$
 \end{corollary}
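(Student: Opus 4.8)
The plan is to combine the isomorphism $u^{He}: K_{0}^{He}(f)^{r+1} \xrightarrow{\cong} K_{0}^{He}(\mathbb{P}(f))$ from Theorem \ref{proj formula for k heller} with the identification of $K_{0}^{He}$ and $K_{0}$ for maps with affine source, and then transport this identification through the projective bundle formula on the target side (Lemma \ref{projective bundle formula for K_n}). The delicate point is that $\mathbb{P}(f)$ itself is \emph{not} a map with affine source even when $X$ is affine, so Lemma \ref{K^b= K for affine} does not apply directly to $\mathbb{P}(f)$; instead it must be applied to $f: X \to S$ and the two projective bundle formulas used to bridge the gap.

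Concretely, the steps are as follows. First, since $X$ is affine, Lemma \ref{K^b= K for affine} gives an isomorphism $K_{0}^{He}(f) \xrightarrow{\cong} K_{0}(f)$, hence an isomorphism $K_{0}^{He}(f)^{r+1} \xrightarrow{\cong} K_{0}(f)^{r+1}$. Second, Lemma \ref{projective bundle formula for K_n} (with $n=0$) gives $K_{0}(f)^{r+1} \xrightarrow{\cong} K_{0}(\mathbb{P}(f))$. Third, Theorem \ref{proj formula for k heller} gives $u^{He}: K_{0}^{He}(f)^{r+1} \xrightarrow{\cong} K_{0}^{He}(\mathbb{P}(f))$. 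Composing, one obtains
\begin{equation*}
 K_{0}^{He}(\mathbb{P}(f)) \xleftarrow[\cong]{u^{He}} K_{0}^{He}(f)^{r+1} \xrightarrow{\cong} K_{0}(f)^{r+1} \xrightarrow{\cong} K_{0}(\mathbb{P}(f)),
\end{equation*}
which yields the asserted isomorphism $K_{0}^{He}(\mathbb{P}(f)) \cong K_{0}(\mathbb{P}(f))$.

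The one thing worth checking — and the main obstacle, such as it is — is that the comparison maps are compatible, i.e. that the natural map $K_{0}^{He}(\mathbb{P}(f)) \to K_{0}(\mathbb{P}(f))$ (the relative analogue of $\eta$, induced functorially) is the one realized by the above composite, so that we are not merely asserting an abstract isomorphism of groups but identifying the canonical comparison morphism as an isomorphism. This follows from the naturality of the comparison map $K_{0}^{He}(-) \to K_{0}(-)$ in the map of schemes, together with the fact that both projective bundle isomorphisms (Theorem \ref{proj formula for k heller} for $K_{0}^{He}$ and Lemma \ref{projective bundle formula for K_n} for $K$-theory) are built from the \emph{same} exact functors $u_i: (\mathcal{F}_1,\alpha,\mathcal{F}_2) \mapsto (\pi_S^*\mathcal{F}_1, \pi_X^*\alpha, \pi_S^*\mathcal{F}_2)(-i)$, so the square relating $u^{He}$, $u^{Q}$ (and thence the $K$-theory version) to the comparison maps commutes. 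Granting this, the corollary is immediate; if one only wants the bare statement that the two groups are isomorphic, even the compatibility check can be skipped and the three displayed isomorphisms simply composed.
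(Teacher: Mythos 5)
Your proof is correct and is essentially identical to the paper's: both compose the isomorphism $K_{0}^{He}(f)^{r+1}\cong K_{0}(f)^{r+1}$ from Lemma \ref{K^b= K for affine} with Lemma \ref{projective bundle formula for K_n} and Theorem \ref{proj formula for k heller}. The extra compatibility discussion is not needed (the paper, like the statement, only claims an abstract isomorphism of groups) but does no harm.
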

 
 \begin{proof}
 By Lemma \ref{projective bundle formula for K_n}, $K_{0}(f)^{r+1} \stackrel{\cong}\to K_{0}(\mathbb{P}(f)).$  We also have an isomorphism $K_{0}^{He}(f)^{r+1}\stackrel{\cong}\to K_{0}(f)^{r+1}$ by Lemma \ref{K^b= K for affine}. Hence we get 
  $$K_{0}(\mathbb{P}(f))\stackrel{\cong}\leftarrow K_{0}(f)^{r+1} \stackrel{\cong}\leftarrow K_{0}^{He}(f)^{r+1} \stackrel{\cong}\to K_{0}^{He}(\mathbb{P}(f)),$$ where the last isomorphism by Theorem \ref{proj formula for k heller}.  \end{proof}

\end{document}